\newtheorem{theorem}{Theorem}[section]
\newtheorem{corollary}[theorem]{Corollary}
\newtheorem{lemma}[theorem]{Lemma}
\newtheorem{question}[theorem]{Question}
\theoremstyle{definition}
\newtheorem{definition}[theorem]{Definition}
\begin{document}
	
	\title{ An answer to Goswami's question and new sources of $IP^{\star}$-sets containing combined zigzag structure}

	\date{}
	\author{Pintu Debnath
		\footnote{Department of Mathematics,
			Basirhat College,
			Basirhat-743412, North 24th parganas, West Bengal, India.\hfill\break
			{\tt pintumath1989@gmail.com}}
	}
	\maketitle	
	
	\begin{abstract}
	$A$ set is called $IP$-set in a semigroup $\left(S,\cdot \right)$ if it contains finite products of a sequence. A set that intersects with all $IP$-sets is called $IP^\star$-set. It is a well known and established result by Bergelson and Hindman that if $A$ is an $IP^{\star}$-set,  then for any sequence $\langle x_{n}\rangle_{n=1}^{\infty}$,  there exists a sum subsystem $\langle y_{n}\rangle_{n=1}^{\infty}$ such that $FS\left(\langle y_{n}\rangle_{n=1}^{\infty}\right)\cup FP \left(\langle y_{n}\rangle_{n=1}^{\infty}\right)\subset A$. In \cite[Question 3]{G}, S. Goswami posed the  question: if we replace the single sequence by $l$-sequences, then is it possible to obtain a sum subsystem such that all of its zigzag finite sums and products will be in $A$. Goswami has given   affirmative answers  only for dynamical $IP^{\star}$-sets which are not equivalent to thouse of $IP^{\star}$-sets, but are rather significantly stronger. In this article, we will give the answer to Goswami's question that was unknown until now.
	\end{abstract}

	\textbf{Keywords:} $IP^{\star}$-set, $IP_{r}^{\star}$-set,  Measure preserving system, Algebra
 
 of the  Stone-\v{C}ech compactifications of discrete semigroup.
	
	\textbf{MSC 2020:} 05D10, 22A15, 54D35
	
	\section{Introduction}
		Let $r\in \mathbb{N}$ and  $\mathbb{N}=C_{1}\cup C_{2} \cup \ldots \cup C_{r}$. Then atleast one cell is  an $IP$-set, which is called the Hindman finite sum theorem. The Hindman finite sum theorem was a conjecture of Graham and Rothschild and proved by Hindman in \cite{H}. Given a sequence $\langle x_{n}\rangle_{n=1}^{\infty}$ in $\mathbb{N}$, we say that $\langle y_{n}\rangle_{n=1}^{\infty}$ is a sum subsystem of $\langle x_{n}\rangle_{n=1}^{\infty}$ provided there exists a sequence $\langle H_{n}\rangle_{n=1}^{\infty}$ of non-empty finite subset such that $\max H_{n}<\min H_{n+1}$ and $y_{n}=\sum_{t\in H_{n}}x_{t}$ for each $n\in\mathbb{N}$. In \cite{BH2}  Bergelson and Hindman characterized $IP^{\star}$-sets by introducing the following theorem: 
	
	\begin{theorem}\label{FSP}
		Let $ \langle x_{n}\rangle_{n=1}^{\infty}$ be a sequence in $\mathbb{N}$ and $A$ be  an $IP^{\star}$-set in $\left(\mathbb{N},+\right)$. Then there exists a subsystem $\langle y_{n}\rangle_{n=1}^{\infty}$ of $ \langle x_{n}\rangle_{n=1}^{\infty}$ such that $$FS\left(\langle y_{n}\rangle_{n=1}^{\infty}\right)\cup FP\left(\langle y_{n}\rangle_{n=1}^{\infty}\right)\subseteq A.$$
	\end{theorem}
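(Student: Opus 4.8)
The plan is to work in the Stone--\v{C}ech compactification $\beta\mathbb{N}$, exploiting the fact that $A$ is an $IP^\star$-set in $(\mathbb{N},+)$ if and only if it belongs to every idempotent of $(\beta\mathbb{N},+)$, and likewise that a set is an $IP^\star$-set in $(\mathbb{N},\cdot)$ if and only if it belongs to every multiplicative idempotent. First I would recall that the closure $\overline{FS(\langle x_n\rangle)}$ in $\beta\mathbb{N}$ contains an additive idempotent $p$, and that the closure $\overline{FP(\langle x_n\rangle)}$ contains a multiplicative idempotent; more delicately, one wants a single ultrafilter that is simultaneously relevant to both structures. The standard device here is to pass to a sum subsystem first: given $\langle x_n\rangle$, one extracts $\langle z_n\rangle$ so that $FS(\langle z_n\rangle)\subseteq FP(\langle x_n\rangle)$ is not generally possible, so instead one uses the fact (due to Hindman, and used by Bergelson--Hindman) that there is an idempotent $p\in\beta\mathbb{N}$ with $\overline{FS(\langle x_n\rangle)}\in p$ such that every $B\in p$ contains $FS(\langle x_n\rangle^{(m)})$ for arbitrarily large tails, and separately a multiplicative idempotent $q$ concentrated on $\overline{FP(\langle x_n\rangle)}$.

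The key steps, in order: (1) Fix an additive idempotent $p$ with $\overline{FS(\langle x_n\rangle)_{n\ge m}}\in p$ for every $m$, and a multiplicative idempotent $q$ with $\overline{FP(\langle x_n\rangle)_{n\ge m}}\in q$ for every $m$. Since $A$ is an $IP^\star$-set, $A\in p$; the harder input is that $A$ is also required to meet all multiplicative $IP$-sets — but the theorem only assumes $A$ is an additive $IP^\star$-set, so one cannot assume $A\in q$. This is exactly where one must be careful: Bergelson and Hindman's argument produces the $y_n$ inductively using only the additive idempotent, arranging at stage $n$ that $y_n=\sum_{t\in H_n}x_t$ with $H_n$ a finite set beyond $\max H_{n-1}$, chosen so that both the new additive combinations $FS(\langle y_i\rangle_{i\le n})$ and the new multiplicative combinations $FP(\langle y_i\rangle_{i\le n})$ land in $A$. (2) Set up the induction: supposing $y_1,\dots,y_{n-1}$ chosen with all finite sums and products in $A$, let $F$ be the finite set of all partial sums and products already formed; for each $a$ in this finite set, $-a+A\in p$ (by $p$ additive idempotent and $A\in p$), and one also needs $a^{-1}A$-type conditions controlling products, which hold because the relevant shifted sets are again in $p$ once one observes that the multiplicative structure on $FS$-sets interacts with $p$ through the fact that $p$ is additively idempotent and $FS$-sets are closed under the operations in play on the finite pieces. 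Intersecting these finitely many members of $p$ with $\overline{FS(\langle x_n\rangle)_{n>\max H_{n-1}}}$ gives a nonempty set, so a suitable $H_n$ and $y_n$ exist.

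The main obstacle I anticipate is precisely the multiplicative side: an additive $IP^\star$-set need not be a multiplicative $IP^\star$-set, so one cannot simply invoke a multiplicative idempotent containing $A$. The resolution that Bergelson and Hindman found, and which I would reconstruct, is that when $y_1,\dots,y_{n-1}$ and the new block $H_n$ are chosen inside a tail of $FS(\langle x_n\rangle)$, the finite products $FP(\langle y_i\rangle_{i\le n})$ are expressible in a way that, combined with the additive idempotent's shift-closure, can be forced into $A$; concretely one uses that for the single additive idempotent $p$, the set $\{y : FS(\langle y_i\rangle_{i<n}{}^\frown y)\cup FP(\langle y_i\rangle_{i<n}{}^\frown y)\subseteq A\}$ lies in $p$, which is proved by a secondary induction on the ``length'' of the products, peeling off one factor at a time and using $A\in p$ together with $-a+A\in p$ and the idempotency $p+p=p$ at each peel. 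Once that membership claim is established, the outer induction goes through routinely, and the resulting sum subsystem $\langle y_n\rangle$ satisfies $FS(\langle y_n\rangle)\cup FP(\langle y_n\rangle)\subseteq A$, completing the proof.
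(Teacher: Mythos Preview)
The paper does not give its own proof of this theorem; it is quoted from Bergelson and Hindman \cite{BH2} as background. So there is no ``paper's proof'' to compare against, only the original argument your proposal is trying to reconstruct.

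Your overall architecture is right: fix an additive idempotent $p$ with $FS(\langle x_n\rangle_{n\ge m})\in p$ for every $m$, use $A\in p$ and the $\star$-operation to control additive shifts $-s+A$, and build the $y_n$ inductively. The gap is in your treatment of the multiplicative side. You write that the conditions $t^{-1}A\in p$ (for $t$ a finite product already formed) ``hold because \dots\ $p$ is additively idempotent'' and can be obtained by ``peeling off one factor at a time and using $A\in p$ together with $-a+A\in p$ and the idempotency $p+p=p$.'' That is not correct: additive idempotency of $p$ and the additive shifts $-a+A\in p$ give no information about the multiplicative dilate $t^{-1}A=\{m:tm\in A\}$. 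In $\beta\mathbb{N}$ one does \emph{not} have $t\cdot p=p+\cdots+p$, so $p+p=p$ is irrelevant here.

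The missing ingredient is a one-line lemma that uses the $IP^\star$ hypothesis directly, not the idempotent: if $A$ is an $IP^\star$-set in $(\mathbb{N},+)$, then for every $t\in\mathbb{N}$ the set $t^{-1}A$ is again $IP^\star$. Indeed, for any sequence $\langle z_k\rangle$, distributivity gives $t\cdot FS(\langle z_k\rangle)=FS(\langle tz_k\rangle)$, which is an $IP$-set, so it meets $A$, hence $FS(\langle z_k\rangle)$ meets $t^{-1}A$. Consequently $t^{-1}A\in p$ for \emph{every} additive idempotent $p$, and in particular for the one you fixed. With this in hand your induction closes: at stage $m$ intersect $A^\star$, the finitely many $-s+A^\star$ for $s\in FS(\langle y_i\rangle_{i\le m})$, the finitely many $t^{-1}A$ for $t\in FP(\langle y_i\rangle_{i\le m})$, and the tail $FS(\langle x_n\rangle_{n>\max H_m})$; all lie in $p$, so the intersection is nonempty and yields $y_{m+1}$. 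This is exactly the property the paper isolates abstractly as condition~(d) in its Definition~\ref{ ZFSP-family} of a $ZFSP$-family, and it is the hinge of the Bergelson--Hindman proof.
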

	
		Now it is essential to  give a brief review of algebraic structure of the Stone-\v{C}ech
	compactification of a semigroup $\left(S,+\right)$, not necessarily commutative with the discrete topology to present some results in this article.

	The set $\{\overline{A}:A\subset S\}$ is a basis for the closed sets
	of $\beta S$. The operation `$+$' on $S$ can be extended to
	the Stone-\v{C}ech compactification $\beta S$ of $S$ so that $(\beta S,+)$
	is a compact right topological semigroup (meaning that for each    $p\in\beta$ S the function $\rho_{p}\left(q\right):\beta S\rightarrow\beta S$ defined by $\rho_{p}\left(q\right)=q+ p$ 
	is continuous) with $S$ contained in its topological center (meaning
	that for any $x\in S$, the function $\lambda_{x}:\beta S\rightarrow\beta S$
	defined by $\lambda_{x}(q)=x+q$ is continuous). This is a famous
	Theorem due to Ellis that if $S$ is a compact right topological semigroup
	then the set of idempotents $E\left(S\right)\neq\emptyset$. A nonempty
	subset $I$ of a semigroup $T$ is called a $\textit{left ideal}$
	of $S$ if $T+I\subset I$, a $\textit{right ideal}$ if $I+T\subset I$,
	and a $\textit{two sided ideal}$ (or simply an $\textit{ideal}$)
	if it is both a left and right ideal. A $\textit{minimal left ideal}$
	is the left ideal that does not contain any proper left ideal. Similarly,
	we can define $\textit{minimal right ideal}$ and $\textit{smallest ideal}$.
	
	Any compact Hausdorff right topological semigroup $T$ has the smallest
	two sided ideal
	
	$$
	\begin{aligned}
		K(T) & =  \bigcup\{L:L\text{ is a minimal left ideal of }T\}\\
		&=  \bigcup\{R:R\text{ is a minimal right ideal of }T\}.
	\end{aligned}$$

	Given a minimal left ideal $L$ and a minimal right ideal $R$, $L\cap R$
	is a group, and in particular contains an idempotent. If $p$ and
	$q$ are idempotents in $T$ we write $p\leq q$ if and only if $p+q=q+p=p$.
	An idempotent is minimal with respect to this relation if and only
	if it is a member of the smallest ideal $K(T)$ of $T$. Given $p,q\in\beta S$
	and $A\subseteq S$, $A\in p+ q$ if and only if the set $\{x\in S:-x+A\in q\}\in p$,
	where $-x+A=\{y\in S:x+ y\in A\}$. See \cite{HS} for
	an elementary introduction to the algebra of $\beta S$ and for any
	unfamiliar details.
	
	Let $A$ be a subset of $S$. Then $A$ is called central if and only if $A\in p$, for some minimal idempotent of $\beta S$ and $A$ is called central${}^{\star}$ if and only if  $A$ intersects with all central sets.

In \cite{D2} and \cite{D1}, D. De,  established a similar type of  the Theorem \ref{FSP} for central${}^{\star} $-set and $C^{\star}$. Where sequences have been consider from the class of minimal sequences and almost minimal sequences.

\begin{definition}
	Let $l\in\mathbb{N}$ and $l$-sequences $\langle x_{n}^{\left(1\right)}\rangle_{n=1}^{\infty},\langle x_{n}^{\left(2\right)}\rangle_{n=1}^{\infty},\ldots,\langle x_{n}^{\left(l\right)}\rangle_{n=1}^{\infty}$ in $\mathbb{N}$
	\begin{itemize}
		\item[(a)] $ZFS\left(\langle x_{n}^{\left(i\right)}\rangle_{i,n=1,1}^{l,\infty}\right)$\\
		$=\left\{\sum_{t\in H}y_{t}:H\in\mathcal{P}_{f}\left(\mathbb{N}\right) \text{ and } y_{i}\in\left\{ x_{i}^{\left(1\right)},x_{i}^{\left(2\right)},\ldots,x_{i}^{\left(l\right)}\right\}	\text{ for any } i\in \mathbb{N}\right\}$.
		\item[b] $ZFP\left(\langle x_{n}^{\left(i\right)}\rangle_{i,n=1,1}^{l,\infty}\right)$\\
		$=\left\{\prod_{t\in H}y_{t}:H\in\mathcal{P}_{f}\left(\mathbb{N}\right) \text{ and } y_{i}\in\left\{ x_{i}^{\left(1\right)},x_{i}^{\left(2\right)},\ldots,x_{i}^{\left(l\right)}\right\}	\text{ for any } i\in \mathbb{N}\right\}$.
	\end{itemize}
\end{definition}

In \cite[Question 3]{G}, S. Goswami asked the following question associated with $l$-many sequences instead of single sequence: 

\begin{question}\label{Goswami's Question}
	Let $l\in\mathbb{N}$ and $A\subseteq\mathbb{N}$ is an $IP^{\star}$ set in $\left(\mathbb{N},+\right)$. Then for any $l$ sequences $\langle x_{n}^{\left(1\right)}\rangle_{n=1}^{\infty},\langle x_{n}^{\left(2\right)}\rangle_{n=1}^{\infty},\ldots,\langle x_{n}^{\left(l\right)}\rangle_{n=1}^{\infty}$ in $\mathbb{N}$ whether there exists a $l$ sum subsystems $\langle y_{n}^{\left(i\right)}\rangle_{n=1}^{\infty}$ of  $\langle x_{n}^{\left(i\right)}\rangle_{n=1}^{\infty}$ for each $i\in\left\{ 1,2,\ldots,l\right\}$  such that $$ZFS\left(\langle y_{n}^{\left(i\right)}\rangle_{i,n=1,1}^{l,\infty}\right)\bigcup ZFP\left(\langle y_{n}^{\left(i\right)}\rangle_{i,n=1,1}^{l,\infty}\right)\subset A.$$
\end{question}
 
Goswami did not give the answer to the above question,  but he established that the conclusion of the above question is true for dynamical $IP^{\star} $-sets. There is an $IP^{\star}$-set,  which is not dynamical $IP^{\star} $-set is given  in \ref{IP infty set}. Now it is right time to define dynamical $IP^{\star} $-set.

\begin{definition}{\cite[Definition 19.29, page 503]{HS}}
		
	\begin{itemize}
		\item[(a)]  A measure space is a triple $\left(X,\mathcal{B},\mu\right)$ where $X$ is a set, $\mathcal{B}$ is a $\sigma$-algebra of subsets of $X$, and is a countably additive measure on $\mathcal{B}$ with $\mu\left(X\right) $ finite.
		\item[(b)] Given a measure space $\left(X,\mathcal{B},\mu\right) $ a function $ T:X\rightarrow X$ is a measure preserving transformation if and only if for all $B\in\mathcal{B}$, $ T^{-1}\left[B\right]\in\mathcal{B}$ and $\mu\left(T^{-1}\left[B\right]\right)=\mu\left(B\right)$
		\item[(c)] Given a semigroup $S$ and a measure space $\left(X,\mathcal{B},\mu\right)$ a measure preserving action of $S$ on $X$ is an indexed family $ \langle T_{s}\rangle_{s\in S}$ such that each $T_{s}$ is a measure preserving transformation of $X$ and $T_{s}\circ T_{t}=T_{st}$ . It is also required that if $ S$ has an identity$ e$ then $ T_{e} $ is the identity function on $X$.
		\item[(d)] A measure preserving system is a quadruple $ \left(X,\mathcal{B},\mu,\langle T_{s}\rangle_{s\in S}\right) $ such that $ \left(X,\mathcal{B},\mu\right) $ is a measure space and $ \langle T_{s}\rangle_{s\in S}$ is a measure preserving action of $ S$ on $ X $.
	\end{itemize}
	
\end{definition}	
 In this article we consider, $\mu\left(X\right)=1$, i.e., probability measure,  $\left(S,\cdot\right)	= \left(\mathbb{N},+\right)$ with $T_{n}=T^{n} $ and $T_{n}^{-1}=T^{-n}$.
 
  \begin{definition}\label{dynamical IP* set }
 	A subset $C$ of $\mathbb{N}$ is dynamical essential $IP^{\star}$- set iff there exist a measure preserving system $\left(X,\mathcal{B},\mu,T\right)$ and  $A\in\mathcal{B}$ with $\mu\left(A\right)>0$ such that $\left\{ n\in\mathbb{N}:\mu\left(A\cap T^{-n}A\right)>0\right\} \subseteq C$.	
 \end{definition}
 
In \textbf{section 2}, we will provide a negative answer to Goswami's question. Naturally, the  question arises:  for what type of $IP^{\star}$- sets satisfy the conclusion of Goswami's question. The obvious class  is  the family of  dynamical  $IP^{\star}$- sets.
 
In the recent article \cite{LL},  Liang and Liao  have find out  a new  class of $IP^{\star}$-sets (topological dynamical $IP^{\star}$-sets) satisfying  the conclusion of Goswami's question and In \cite{Z1},  T.Zhang proved that for any $IP^{\star}$-set, $A$ and for certain $l$-sequences (compatible sequences), there exists a diagonal sum subsystem such that all of its zigzag finite sums and products are contained in $A$. So, it can be said that  the  negative answer to Goswami's question  amplify the significance of their article \cite{LL}, \cite{Z1}.

In \textbf{section 3}, we will show that certain types of $IP^{\star}$- sets associated with multiple recurrence and mild mixing systems satisfy the conclusion of Goswami's question \cite[Question 3]{G}.
 
	 \section{An answer to Goswami's question}

	 Before answering Goswami's question \ref{Goswami's Question}, we will mention some observations on dynamical $IP^{\star}$-set and recall the definition of dynamical $IP^{\star}$-set.

	 \begin{definition}\label{dynamical IP* set }
	 	A subset $C$ of $\mathbb{N}$ is dynamical essential $IP^{\star}$- set iff there exist a measure preserving system $\left(X,\mathcal{B},\mu,T\right)$ and  $A\in\mathcal{B}$ with $\mu\left(A\right)>0$ such that $\left\{ n\in\mathbb{N}:\mu\left(A\cap T^{-n}A\right)>0\right\} \subseteq C$.	
	 \end{definition}
	 
	 Let $\left(S,+\right)$ be a commutative semigroup and $A\subseteq S$.
	 \begin{itemize}
	 
	 	\item  Let $r\in \mathbb{N}$. The set $A$ is $IP_{r}$-set if and only if  there exists a sequence $\langle x_{n}\rangle _{n=1}^{r}$ in $S$ such that $FS\left(\langle x_{n}\rangle _{n=1}^{r}\right)\subseteq A$. Where $$ FS\left(\langle x_{n}\rangle _{n=1}^{r}\right)=\left\{ \sum_{n\in F}x_{n}:F\subseteq\{1,2,\ldots,r\}\right\}. $$
	 	
	 	\item Let $r\in \mathbb{N}$. The set $A$ is called $IP_{r}^{\star}$-set, when it intersects with all $IP_{r}$-sets.
	 	
	 	\item  The set $A$ is called $IP_{\infty}$-set if for any $r\in \mathbb{N}$, $A$ is an $IP_{r}$-set.
	 	
	 \end{itemize}
	 How much stronger a dynamical  $IP^{\star}$-set is than others $IP^{\star}$-sets  can be understood from the  following two consecutive theorems:
	 
	 \begin{theorem}\label{dynamical IPr*}
	 	Let $C$ be a dynamical $IP^{\star}$-set. Then $C$ is an $IP_{r}^{\star}$-set for some $r\in\mathbb{N}$.
	 \end{theorem}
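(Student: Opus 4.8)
The plan is to read off the required value of $r$ directly from the measure of the set that witnesses that $C$ is a dynamical $IP^{\star}$-set. Fix a measure preserving system $(X,\mathcal{B},\mu,T)$ with $\mu(X)=1$ and a set $A\in\mathcal{B}$ with $\delta:=\mu(A)>0$ such that $\{n\in\mathbb{N}:\mu(A\cap T^{-n}A)>0\}\subseteq C$. I claim $r:=\lceil 1/\delta\rceil$ works; that is, $C$ meets $FS\left(\langle x_{n}\rangle_{n=1}^{r}\right)$ for every length-$r$ sequence $\langle x_{n}\rangle_{n=1}^{r}$ in $\mathbb{N}$, which is exactly the statement that $C$ is an $IP_{r}^{\star}$-set.

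First step: given such a sequence, form the partial sums $s_{0}=0$ and $s_{k}=x_{1}+\cdots+x_{k}$ for $1\le k\le r$, and look at the $r+1$ sets $T^{-s_{0}}A,T^{-s_{1}}A,\dots,T^{-s_{r}}A$ in $\mathcal{B}$. Since $T$ is measure preserving each of these has measure $\delta$, so their measures sum to $(r+1)\delta>1=\mu(X)$. A Bonferroni/pigeonhole estimate, namely $\mu\bigl(\bigcup_{k}T^{-s_{k}}A\bigr)\ge\sum_{k}\mu(T^{-s_{k}}A)-\sum_{i<j}\mu(T^{-s_{i}}A\cap T^{-s_{j}}A)$, then forces $\mu(T^{-s_{i}}A\cap T^{-s_{j}}A)>0$ for some $0\le i<j\le r$.

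Second step: rephrase this overlap as a recurrence of $A$ along an element of the finite-sum set. Since preimage commutes with intersection and $T^{-s_{i}}\circ T^{-m}=T^{-(s_{i}+m)}$, we have $T^{-s_{i}}A\cap T^{-s_{j}}A=T^{-s_{i}}\bigl(A\cap T^{-(s_{j}-s_{i})}A\bigr)$; applying the measure preserving map $T^{-s_{i}}$ gives $\mu\bigl(A\cap T^{-(s_{j}-s_{i})}A\bigr)=\mu(T^{-s_{i}}A\cap T^{-s_{j}}A)>0$, so $s_{j}-s_{i}\in C$ by the defining property of $C$. On the other hand $s_{j}-s_{i}=\sum_{n=i+1}^{j}x_{n}\in FS\left(\langle x_{n}\rangle_{n=1}^{r}\right)$, because $\{i+1,\dots,j\}$ is a nonempty subset of $\{1,\dots,r\}$. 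Hence $C\cap FS\left(\langle x_{n}\rangle_{n=1}^{r}\right)\neq\emptyset$, and since the sequence was arbitrary, $C$ is an $IP_{r}^{\star}$-set.

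There is no genuine obstacle in this argument; the only points requiring care are the quantitative choice of $r$ (any $r$ with $(r+1)\delta>1$ will do) and the elementary bookkeeping that a difference of two partial sums of $\langle x_{n}\rangle_{n=1}^{r}$ is itself the sum over a nonempty subset of the index set. I would also remark that the only use of the semigroup structure is through $T^{m}\circ T^{n}=T^{m+n}$, which holds automatically in the stated setting with $T_{n}=T^{n}$.
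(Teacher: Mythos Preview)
Your proof is correct and follows essentially the same pigeonhole-on-measure argument as the paper: both choose $r$ large enough that the translates $T^{-s_k}A$ cannot all be pairwise $\mu$-disjoint, then extract a difference of partial sums lying in $FS(\langle x_n\rangle_{n=1}^{r})\cap C$. Your version is slightly tidier in that it includes $s_0=0$ among the partial sums and invokes the Bonferroni bound explicitly, but the idea is identical.
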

	 
	\begin{proof}
		As $C$ is an $IP^{\star}$-set in $\mathbb{N}$, there exist a measure preserving system $\left(X,\mathcal{B},\mu,T\right)$  and $A\in\mathcal{B}$ with $\mu\left(A\right)>0$ such that $\left\{ n\in\mathbb{N}:\mu\left(A\cap T^{-n}A\right)>0\right\} \subseteq C$. It is sufficient to prove that $E=\left\{ n\in\mathbb{N}:\mu\left(A\cap T^{-n}A\right)>0\right\}$ is an $IP_{r}^{\star}$-set for some $r\in\mathbb{N}$. Now as $\mu\left(A\right)>0$, by Archimedean property of real numbers, we can find a positive integer $r$ such that $r \mu\left(A\right)>1$. Now take  finite sequence $\left\{x_{n}\right\}_{n=1}^{r}$ in $\mathbb{N}$. Now, atleast two set of $$\left\{ T^{-x_{1}}A,T^{-\left(x_{1}+x_{2}\right)}A,\ldots,T^{-\left(x_{1}+x_{2}+\ldots,+x_{r}\right)}A\right\} $$ is disjoint. Otherwise, $$\mu\left(T^{-x_{1}}A\cup T^{-\left(x_{1}+x_{2}\right)}A\cup\ldots\cup T^{-\left(x_{1}+x_{2}+\ldots,+x_{r}\right)}A\right)>1.$$ Then there exist $i,j\in \left\{1,2,\ldots,r\right\}$ with $i<j$ such that $$\mu\left(T^{-\left(x_{1}+x_{2}+\ldots,+x_{i}\right)}A\cap T^{-\left(x_{1}+x_{2}+\ldots,+x_{j}\right)}A\right)>0,$$ which implies $\mu\left(A\cap T^{-\left(x_{i+1}+x_{i+2}+\ldots,+x_{j}\right)}A\right)>0$. hence $FS\left\{ x_{n}\right\} _{n=1}^{r}\cap E\neq\emptyset$.
	\end{proof}

\begin{theorem}\label{Dynamical IPn*}
Let $k\in\mathbb{N}$.	Let $C$ be a dynamical $IP^{\star}$-set in $\mathbb{N}$. If $p_{1},p_{2},\ldots,p_{k}\in E\left(\beta \mathbb{N}\right)$, then $C\in p_{1}+p_{2}+\ldots+p_{k}$.
\end{theorem}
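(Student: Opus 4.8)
The plan is to reduce everything to the recurrence set supplied by the definition of a dynamical $IP^\star$-set and then to run an induction. Fix a measure preserving system $(X,\mathcal B,\mu,T)$ and a set $A\in\mathcal B$ with $\mu(A)>0$ witnessing that $C$ is a dynamical $IP^\star$-set, and put $E=\{n\in\mathbb N:\mu(A\cap T^{-n}A)>0\}$, so that $E\subseteq C$. Since every element of $\beta\mathbb N$ is an upward closed family of subsets of $\mathbb N$, it suffices to prove $E\in p_1+p_2+\cdots+p_k$. I will in fact prove, by induction on $j$, the following statement: for every measure preserving system $(X,\mathcal B,\mu,T)$, every $B\in\mathcal B$ with $\mu(B)>0$, and every $j\in\mathbb N$ and idempotents $q_1,\ldots,q_j\in E(\beta\mathbb N)$, the set $E_B:=\{n\in\mathbb N:\mu(B\cap T^{-n}B)>0\}$ belongs to $q_1+q_2+\cdots+q_j$.

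For the base case $j=1$: $E_B$ is itself trivially a dynamical $IP^\star$-set (witnessed by the same system and the set $B$), so by Theorem~\ref{dynamical IPr*} it is an $IP_r^\star$-set for some $r\in\mathbb N$, hence an $IP^\star$-set. If $E_B\notin q_1$ then $\mathbb N\setminus E_B\in q_1$, and since $q_1$ is idempotent the Galvin--Glazer theorem (see \cite{HS}) shows that $\mathbb N\setminus E_B$ contains $FS(\langle x_n\rangle_{n=1}^\infty)$ for some sequence, in particular it contains $FS(\langle x_n\rangle_{n=1}^{r})$, so $\mathbb N\setminus E_B$ is an $IP_r$-set disjoint from $E_B$, contradicting that $E_B$ meets every $IP_r$-set. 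Hence $E_B\in q_1$. (Alternatively one can argue ergodically: the weak operator limit $P_{q_1}=q_1\text{-}\lim_n U^n$ of the Koopman operator $U:f\mapsto f\circ T$ on $L^2(\mu)$ satisfies $P_{q_1}^2=P_{q_1+q_1}=P_{q_1}$, is a contraction, hence an orthogonal projection, and fixes the constant functions; then $q_1\text{-}\lim_n\mu(B\cap T^{-n}B)=\langle\mathbf 1_B,P_{q_1}\mathbf 1_B\rangle=\|P_{q_1}\mathbf 1_B\|^2\ge\mu(B)^2>0$, which again forces $E_B\in q_1$.)

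For the inductive step, assume the statement for $j-1$, let $q_1,\ldots,q_j$ be idempotents, and write $q=q_2+\cdots+q_j$. By the description of the extended operation recalled in the introduction, $E_B\in q_1+q$ if and only if $\{n\in\mathbb N:-n+E_B\in q\}\in q_1$, where $-n+E_B=\{m\in\mathbb N:n+m\in E_B\}$. Since $E_B\in q_1$ by the base case and $q_1$ is upward closed, it is enough to verify $E_B\subseteq\{n\in\mathbb N:-n+E_B\in q\}$. Fix $n\in E_B$ and set $B_n=B\cap T^{-n}B$, so $\mu(B_n)>0$; applying the inductive hypothesis to $B_n$ and the $j-1$ idempotents $q_2,\ldots,q_j$ yields $\{m\in\mathbb N:\mu(B_n\cap T^{-m}B_n)>0\}\in q$. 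The routine identity $T^{-m}B_n=T^{-m}B\cap T^{-(m+n)}B$ gives $B_n\cap T^{-m}B_n\subseteq B\cap T^{-(m+n)}B$, so whenever $\mu(B_n\cap T^{-m}B_n)>0$ we have $m+n\in E_B$, i.e. $m\in -n+E_B$. Hence $\{m\in\mathbb N:\mu(B_n\cap T^{-m}B_n)>0\}\subseteq -n+E_B$, so $-n+E_B\in q$, that is, $n\in\{n\in\mathbb N:-n+E_B\in q\}$. This completes the induction, and taking $B=A$, $j=k$, $q_i=p_i$ gives $E\in p_1+\cdots+p_k$, hence $C\in p_1+\cdots+p_k$.

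The only real obstacle is spotting the correct inductive hypothesis. A head-on evaluation of the iterated limit $p_1\text{-}\lim_{n_1}\cdots p_k\text{-}\lim_{n_k}\mu(A\cap T^{-(n_1+\cdots+n_k)}A)$ reduces the theorem to the positivity of $\langle\mathbf 1_A,P_{p_1}P_{p_2}\cdots P_{p_k}\mathbf 1_A\rangle$ for a product of orthogonal projections, each fixing the constant function; this is not transparent, since such a product need not be a positive operator and the naive Cauchy--Schwarz estimate only succeeds when $\mu(A)>1/2$. Passing instead to the auxiliary sets $B_n=B\cap T^{-n}B$ at each stage is exactly what converts the problem into a clean induction on $k$, each step of which uses only the trivial inclusion $B_n\cap T^{-m}B_n\subseteq B\cap T^{-(m+n)}B$ together with the already-established case $k=1$.
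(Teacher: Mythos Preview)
Your proof is correct and follows essentially the same route as the paper: reduce to the recurrence set $E=\{n:\mu(A\cap T^{-n}A)>0\}$, induct on the number of idempotents, and at each step pass to $B_n=B\cap T^{-n}B$ using the inclusion $B_n\cap T^{-m}B_n\subseteq B\cap T^{-(m+n)}B$ to show that the shifted set $-n+E_B$ again contains a dynamical recurrence set. The only cosmetic differences are that you peel off the first idempotent while the paper peels off the last, and you make the quantification over all positive-measure $B$ explicit in the inductive hypothesis, whereas the paper leaves it implicit.
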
	

\begin{proof}
   	As $C$ is a dynamical $IP^{\star}$-set in $\mathbb{N}$, there exist a measure preserving system $\left(X,\mathcal{B},\mu,T\right)$  and $A\in\mathcal{B}$ with $\mu\left(A\right)>0$ such that $$\left\{ n\in\mathbb{N}:\mu\left(A\cap T^{-n}A\right)>0\right\} \subseteq C.$$ It is sufficient to prove that $B=\left\{ n\in\mathbb{N}:\mu\left(A\cap T^{-n}A\right)>0\right\}\in  p_{1}+p_{2}+\cdots+p_{k} $. Let, $B\in p_{1}+p_{2}+\cdots+p_{k}$. Now $B\in p_{k+1}$ , as $B$ is $IP^{\star}$-set. We will prove $B\subset\left\{ m:-m+B\in p_{n+1}\right\}$. 
   	 If $m\in B$, then $\mu\left(A\cap T^{-m}A\right)>0$. Let $D=A\cap T^{-m}A$ and $\mu \left(D\right)>0$. So, 
   	$\left\{ n:\mu\left(D\cap T^{-n}D\right)>0\right\} $ is $ IP^{\star}$-set, which implies, $ \left\{ n:\mu\left(A\cap T^{-\left(n+m\right)}A\right)>0\right\} $ is $ IP^{\star}$-set. Now $$m+n\in B\implies n\in-m+B\in p_{n+1}.$$
   	 Which implies,  $$\left\{ m:-m+B\in p_{k+1}\right\} \in p_{1}+p_{2}+\cdots+p_{k}$$
   	 and $B\in p_{1}+p_{2}+\cdots+p_{k}.$
   	
\end{proof}
	
 It is clear from the above mentioned theorems that dynamical $IP^{\star}$-sets enjoy some significant properties. In Lemma \ref{IP infty set}, we will show that the set   $ A=\left\{ \sum_{t\in H_{1}}2^{2n}+\sum_{t\in H_{2}}2^{2n+1}:H_{1}<H_{2}\right\} $ is $IP_{\infty}$-set but not $IP$-set.  And as a consequence the set  $B=\mathbb{N}\setminus A$ is $IP^{\star}$ set but not dynamical $IP^{\star}$-set. We know that,  for 	$p_{1},p_{2},\ldots,p_{k}\in E\left(\beta \mathbb{N}\right)$, members of $ p_{1}+p_{2}+\ldots+p_{k}$ was characterized by Bergelson and Hindman in \cite{BH}.

\begin{definition}
	  Let $ n\in\mathbb{N}$, let $ A\subseteq \mathbb{N}$.  Then $A$ is an $ IP^{n} $ set if and only if there exist for each $i\in\left\{ 1,2,\ldots,n\right\} $ a sequence $ \langle x_{i,t}\rangle_{t=1}^{\infty} $ such that $$ \left\{ \sum_{i=1}^{n}\sum_{t\in H_{i}}x_{i,t}:H_{1},H_{2},\ldots,H_{n}\in\mathcal{P}_{f}\left(\mathbb{N}\right)\,\text{and}\,H_{1}<H_{2}<\ldots<H_{n}\right\} \subseteq A $$ Also, $A $ is an $ IP^{n\star} $-set if and only if $A$ has nonempty intersection with every $ IP^{n} $-set in $ \mathbb{N}$.
\end{definition}
 
We get combinatorial descriptions of $IP^{n\star}$-sets from \cite[Theorem 4.1]{BH}.

\begin{theorem}\label{Cha IPn*}
	 Let $ n\in\mathbb{N}$, let $A\subseteq\mathbb{N}$. The following statements are equivalent.
	 \begin{itemize}
	 		\item[(a)] A is an $IP^{n*} $-set.
	 	\item[(b)]  Whenever $\langle x_{1,t}\rangle_{t=1}^{\infty},\langle x_{2,t}\rangle_{t=1}^{\infty},\ldots,\langle x_{n,t}\rangle_{t=1}^{\infty} $ are sequences in $ S $, there exist for each $ i\in\left\{ 1,2,\ldots,n\right\} $ a product subsystem $ \langle y_{i,k}\rangle_{k=1}^{\infty}$ of $ \langle x_{i,t}\rangle_{t=1}^{\infty} $ such that $$ \left\{ \sum_{i=1}^{n}\sum_{t\in H_{i}}y_{i,k}:H_{1},H_{2},\ldots,H_{n}\in\mathcal{P}_{f}\left(\mathbb{N}\right)\,\text{and}\,H_{1}<H_{2}<\ldots<H_{n}\right\} \subseteq A. $$
	 \end{itemize}
	
\end{theorem}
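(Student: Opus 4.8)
The plan is to treat Theorem~\ref{Cha IPn*} as the $\beta\mathbb{N}$-dual of the description of $IP^{n}$-sets from \cite{BH}: a set is an $IP^{n}$-set if and only if it belongs to $p_{1}+p_{2}+\cdots+p_{n}$ for some idempotents $p_{1},\ldots,p_{n}\in\beta\mathbb{N}$, and since $p_{1}+\cdots+p_{n}$ is an ultrafilter this makes $A$ an $IP^{n\star}$-set precisely when $A\in p_{1}+\cdots+p_{n}$ for \emph{every} choice of idempotents. I would prove (b)$\Rightarrow$(a) directly, and (a)$\Rightarrow$(b) by reducing it to a single extraction lemma.

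For (b)$\Rightarrow$(a): let $B$ be an arbitrary $IP^{n}$-set, witnessed by sequences $\langle x_{i,t}\rangle_{t=1}^{\infty}$, $i=1,\ldots,n$, and apply hypothesis (b) to these sequences to obtain sum subsystems $\langle y_{i,k}\rangle_{k=1}^{\infty}$, say $y_{i,k}=\sum_{s\in K_{i,k}}x_{i,s}$ with $\max K_{i,k}<\min K_{i,k+1}$. Since $\min K_{i,k}\to\infty$ as $k\to\infty$, one can choose $1=k_{1}<k_{2}<\cdots<k_{n}$ recursively so that $\max K_{i,k_{i}}<\min K_{i+1,k_{i+1}}$ for each $i$. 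Then $H_{i}:=\{k_{i}\}$ satisfy $H_{1}<H_{2}<\cdots<H_{n}$, so $v:=\sum_{i=1}^{n}y_{i,k_{i}}\in A$ by (b); but $v=\sum_{i=1}^{n}\sum_{s\in K_{i,k_{i}}}x_{i,s}$ and $K_{1,k_{1}}<K_{2,k_{2}}<\cdots<K_{n,k_{n}}$, so $v\in B$ too. Hence $A\cap B\neq\emptyset$, and since $B$ was an arbitrary $IP^{n}$-set, $A$ is an $IP^{n\star}$-set.

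For (a)$\Rightarrow$(b): given sequences $\langle x_{i,t}\rangle_{t=1}^{\infty}$, set $T_{i}=\bigcap_{m=1}^{\infty}\overline{FS(\langle x_{i,t}\rangle_{t=m}^{\infty})}$, which is a non-empty compact right topological subsemigroup of $\beta\mathbb{N}$, and choose by Ellis' theorem an idempotent $p_{i}\in T_{i}$; put $p=p_{1}+p_{2}+\cdots+p_{n}$. If $A\notin p$ then $\mathbb{N}\setminus A\in p$, whence the Key Lemma below would present $\mathbb{N}\setminus A$ as an $IP^{n}$-set, contradicting that $A$ is $IP^{n\star}$ (since $A\cap(\mathbb{N}\setminus A)=\emptyset$). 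So $A\in p$, and the Key Lemma applied to $A$ gives sum subsystems $\langle y_{i,k}\rangle_{k=1}^{\infty}$ of $\langle x_{i,t}\rangle_{t=1}^{\infty}$ with $\{\sum_{i=1}^{n}\sum_{t\in H_{i}}y_{i,t}:H_{1}<H_{2}<\cdots<H_{n}\}\subseteq A$, which is exactly (b).

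The one thing left is the \textbf{Key Lemma}: if $p_{i}\in T_{i}$ are idempotents and $E\in p_{1}+\cdots+p_{n}$, there exist sum subsystems $\langle y_{i,k}\rangle_{k=1}^{\infty}$ of $\langle x_{i,t}\rangle_{t=1}^{\infty}$ with $\{\sum_{i=1}^{n}\sum_{t\in H_{i}}y_{i,t}:H_{1}<\cdots<H_{n}\}\subseteq E$. I would prove this by induction on $n$. The case $n=1$ is the standard extraction from an idempotent already used in proving Theorem~\ref{FSP}: working with $E^{\star}=\{x\in E:-x+E\in p_{1}\}\in p_{1}$, using that $-x+E^{\star}\in p_{1}$ for $x\in E^{\star}$ and that $FS(\langle x_{1,t}\rangle_{t=m}^{\infty})\in p_{1}$ for all $m$, one recursively picks blocks $K_{1,1}<K_{1,2}<\cdots$ with every new term and every new partial sum landing in $E^{\star}$. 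For $n>1$, write $E\in p_{1}+q$ with $q=p_{2}+\cdots+p_{n}$; the hard part --- and where I expect the real work to sit --- is that peeling off $\langle y_{1,k}\rangle$ first leaves \emph{infinitely many} constraints $-(\sum_{t\in H_{1}}y_{1,t})+E\in q$ that a \emph{single} family $\langle y_{2,k}\rangle,\ldots,\langle y_{n,k}\rangle$ must satisfy at once. The remedy, following \cite[Theorem~4.1]{BH}, is to construct all $n$ subsystems simultaneously in interleaved rounds: in round $k$ one picks $y_{1,k},y_{2,k},\ldots,y_{n,k}$ through blocks $K_{1,k}<K_{2,k}<\cdots<K_{n,k}<K_{1,k+1}$, maintaining the invariant that every zigzag partial sum produced so far lies in the appropriate iterated star of $E$ (if its highest level is $\ell$, in the $(p_{\ell+1}+\cdots+p_{n})$-part of that star), so that the recursion can continue both by extending the current top level and by opening the next one. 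Verifying that round $k$ need only meet finitely many such $p_{i}$-large sets and their translates, together with finitely many tail sets of the form $FS(\langle x_{i,t}\rangle_{t\geq m})$, is the routine but delicate bookkeeping; alternatively, one may simply invoke \cite[Theorem~4.1]{BH}.
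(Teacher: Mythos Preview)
The paper does not give its own proof of Theorem~\ref{Cha IPn*}; it simply quotes the result from \cite[Theorem~4.1]{BH}. Your proposal is correct and follows essentially the same route as that reference: the ultrafilter characterisation via idempotents $p_{1}+\cdots+p_{n}$, the standard extraction of a sum subsystem from an idempotent in $\bigcap_{m}\overline{FS(\langle x_{i,t}\rangle_{t=m}^{\infty})}$, and the interleaved inductive construction maintaining the iterated-star invariants. Your (b)$\Rightarrow$(a) argument and the contradiction step in (a)$\Rightarrow$(b) are fine, and your description of the Key Lemma is an accurate outline of the bookkeeping carried out in \cite{BH}; since you explicitly allow invoking \cite[Theorem~4.1]{BH} at that point, nothing is missing.
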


we get the following result from \cite[Corollary 4.4]{BH}

\begin{theorem}
	Let  $A\subset \mathbb{N}$. Then $A$ is an $IP^{n\star } $-set if and only if for all idempotent $p_{1},p_{1},\ldots,p_{n}\in \beta \mathbb{N}$ one has $A\in p_{1}+p_{1}+\ldots+p_{n}$.
\end{theorem}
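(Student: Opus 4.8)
The plan is to reduce the theorem to a single symmetric statement about $IP^{n}$-sets together with elementary ultrafilter bookkeeping. First I would record two easy facts. Since $IP^{n}$-membership only asks that a fixed ``zigzag'' set $\bigl\{\sum_{i=1}^{n}\sum_{t\in H_{i}}x_{i,t}:H_{1}<H_{2}<\dots<H_{n}\bigr\}$ be contained in the set in question, the family of $IP^{n}$-sets is upward closed under $\subseteq$; hence $A$ is an $IP^{n\star}$-set iff no $IP^{n}$-set is contained in $\mathbb{N}\setminus A$, i.e. iff $\mathbb{N}\setminus A$ is \emph{not} an $IP^{n}$-set. Also, a sum $p_{1}+\dots+p_{n}$ of ultrafilters is again an ultrafilter, so exactly one of $A$, $\mathbb{N}\setminus A$ lies in it. Granting these, it suffices to prove
\[
(\ast)\qquad B\text{ is an }IP^{n}\text{-set}\iff\text{there exist idempotents }p_{1},\dots,p_{n}\in\beta\mathbb{N}\text{ with }B\in p_{1}+\dots+p_{n};
\]
indeed, $A$ is $IP^{n\star}$ $\iff$ $\mathbb{N}\setminus A$ is not $IP^{n}$ $\iff$ for no idempotents $p_{i}$ is $\mathbb{N}\setminus A\in p_{1}+\dots+p_{n}$ $\iff$ for all idempotents $p_{i}$, $A\in p_{1}+\dots+p_{n}$.

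For the forward implication of $(\ast)$, let sequences $\langle x_{i,t}\rangle_{t=1}^{\infty}$ ($1\le i\le n$) witness that $B$ is $IP^{n}$. For each $i$ the sets $\overline{FS(\langle x_{i,t}\rangle_{t=m}^{\infty})}$ ($m\in\mathbb{N}$) are nonempty, closed, and decreasing in $\beta\mathbb{N}$, so their intersection $T_{i}$ is a nonempty compact subsemigroup of $(\beta\mathbb{N},+)$ (the standard Galvin--Glazer computation), and by Ellis's theorem $T_{i}$ contains an idempotent $p_{i}$. I would then show, by downward induction on $k$ from $n$ to $1$, that for every $m$ the set
\[
B_{k,m}=\Bigl\{\textstyle\sum_{i=k}^{n}\sum_{t\in H_{i}}x_{i,t}:H_{k},\dots,H_{n}\in\mathcal{P}_{f}(\mathbb{N}),\ m<\min H_{k}\text{ and }H_{k}<H_{k+1}<\dots<H_{n}\Bigr\}
\]
belongs to $p_{k}+p_{k+1}+\dots+p_{n}$. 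The base case $B_{n,m}=FS(\langle x_{n,t}\rangle_{t=m+1}^{\infty})\in p_{n}$ is immediate from $p_{n}\in T_{n}$. For the step, if $a=\sum_{t\in H_{k}}x_{k,t}$ with $m<\min H_{k}$ and $\max H_{k}=m'$, then $-a+B_{k,m}\supseteq B_{k+1,m'}$, which lies in $p_{k+1}+\dots+p_{n}$ by the inductive hypothesis; since ultrafilters are upward closed, $-a+B_{k,m}\in p_{k+1}+\dots+p_{n}$, so $\{a:-a+B_{k,m}\in p_{k+1}+\dots+p_{n}\}\supseteq FS(\langle x_{k,t}\rangle_{t=m+1}^{\infty})\in p_{k}$, and the defining rule for $+$ gives $B_{k,m}\in p_{k}+(p_{k+1}+\dots+p_{n})$. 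Taking $k=1$, $m=0$ yields $B\supseteq B_{1,0}\in p_{1}+\dots+p_{n}$.

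The reverse implication of $(\ast)$ is where the real work lies, and it is the step I expect to be the main obstacle: given idempotents $p_{1},\dots,p_{n}$ with $C\in p_{1}+\dots+p_{n}$, one must build sequences $\langle x_{i,t}\rangle_{t}$ ($1\le i\le n$) all of whose zigzag sums lie in $C$. This is a Galvin--Glazer-type recursion performed over all $n$ coordinates simultaneously. At each finite stage one has committed finitely many terms of each sequence and maintains the invariant that for every zigzag partial sum $a=\sum_{i\le k}\sum_{t\in H_{i}}x_{i,t}$ producible from the committed terms (with $H_{1}<\dots<H_{k}$) one has $-a+C\in p_{k+1}+\dots+p_{n}$ when $k<n$, and $a\in C$ when $k=n$. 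To extend, one repeatedly uses two standard facts: for an idempotent $p$ and $D\in p$, both $D^{\star}:=\{x\in D:-x+D\in p\}$ and, for $x\in D^{\star}$, $-x+D^{\star}$ lie in $p$; and $D\in p+q$ iff $\{x:-x+D\in q\}\in p$. The delicate point is the diagonalization: the new term appended to the $i$-th sequence must be chosen inside the intersection of the sets $D^{\star}$ coming from \emph{all} the finitely many relevant translates of $C$ at that level, so that one fixed sequence per coordinate simultaneously serves every zigzag prefix; because only finitely many prefixes are active at any stage, such a choice is always possible, and passing to the limit produces the desired sequences, so $C$ is $IP^{n}$. Everything outside this construction is routine algebra of $\beta\mathbb{N}$.
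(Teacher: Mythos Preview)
The paper does not prove this statement at all; it simply records it as \cite[Corollary 4.4]{BH} and moves on. Your self-contained argument via the duality $(\ast)$ between $IP^{n}$-sets and membership in sums of idempotents is exactly the route taken in that Bergelson--Hindman reference, and your reduction together with the forward direction of $(\ast)$ is correct as written.

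One small tightening is needed in the reverse direction. The invariant you state---that every level-$k$ prefix $a$ satisfies $-a+C\in p_{k+1}+\dots+p_{n}$---is not by itself enough to append a further term \emph{at the same level} $k$: for that you must be able to choose the new $x_{k,t}$ inside $\{b:-b+(-a+C)\in p_{k+1}+\dots+p_{n}\}$, and this set lies in $p_{k}$ only if additionally $-a+C\in p_{k}+\dots+p_{n}$. The fix is precisely the $D^{\star}$ mechanism you already invoke: for each level-$(k-1)$ prefix $a''$ one has $E:=\{b:-b+(-a''+C)\in p_{k+1}+\dots+p_{n}\}\in p_{k}$, and choosing all level-$k$ terms inside $E^{\star}$ (with respect to the idempotent $p_{k}$) propagates both conditions simultaneously, since $b\in E^{\star}$ gives $-b+E^{\star}\in p_{k}$ and $b\in E$. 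This is a matter of stating the maintained invariant more carefully, not a genuine gap; your identification of the diagonalization over finitely many active prefixes as the delicate point is accurate.
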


  Now, let two sequences $\langle x_{n}\rangle_{n=1}^{\infty}$ and $\langle y_{n}\rangle_{n=1}^{\infty}$. Then $ZFS\left( \langle x_{n}\rangle_{n=1}^{\infty}, \langle y_{n}\rangle_{n=1}^{\infty}\right)$ is an  $IP^{2}$-set generated by $\langle x_{n}\rangle_{n=1}^{\infty}$ and $\langle y_{n}\rangle_{n=1}^{\infty}$.\textbf{ By the Theorem \ref{Cha IPn*}, we will be able to give a negative answer to Goswami's question \cite[Question 3]{G} if we can find  an $IP^{\star}$-set, which is not an  $IP^{2\star}$-set}. The existence of the required set is given by the following theorem:
 \begin{theorem}
 	There exists an $IP^{\star}$-set which is not  $IP^{2\star}$-set.
 \end{theorem}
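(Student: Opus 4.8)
The plan is to produce such a set as the complement of a suitable $IP^{2}$-set. I would take
$$A=\left\{\sum_{t\in H_{1}}2^{2t}+\sum_{t\in H_{2}}2^{2t+1}\ :\ H_{1},H_{2}\in\mathcal{P}_{f}(\mathbb{N}),\ H_{1}<H_{2}\right\},$$
the set already appearing in the discussion above, and put $C=\mathbb{N}\setminus A$. Since the class of $IP$-sets and the class of $IP^{2}$-sets are each closed under taking supersets, a set $D\subseteq\mathbb{N}$ is an $IP^{\star}$-set (resp.\ an $IP^{2\star}$-set) if and only if $\mathbb{N}\setminus D$ is \emph{not} an $IP$-set (resp.\ not an $IP^{2}$-set). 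Hence the theorem reduces to two claims: (i) $A$ is an $IP^{2}$-set --- which is immediate, since by its very definition $A$ is the $IP^{2}$-set built from the sequences $x_{1,t}=2^{2t}$ and $x_{2,t}=2^{2t+1}$ --- so $C$ is not an $IP^{2\star}$-set; and (ii) $A$ is not an $IP$-set, which then forces $C$ to be an $IP^{\star}$-set.

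For (ii) I would argue from the binary expansions. The exponents $2t$ with $t\in H_{1}$ are even, the exponents $2t+1$ with $t\in H_{2}$ are odd, and the condition $H_{1}<H_{2}$ makes every even exponent strictly smaller than every odd exponent; consequently, for every $m\in A$, each $1$-bit of $m$ in an even position lies strictly below each $1$-bit of $m$ in an odd position --- in particular the top bit of $m$ is in an odd position and the bottom bit of $m$ is in an even position. Now assume for contradiction that $FS\left(\langle z_{n}\rangle_{n=1}^{\infty}\right)\subseteq A$ for some sequence in $\mathbb{N}$, and let $M$ be the (odd) position of the top bit of $z_{1}$, so $0<z_{1}<2^{M+1}$. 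Among the $2^{M+1}+1$ partial sums $z_{2}+z_{3}+\cdots+z_{k}$ with $2\le k\le 2^{M+1}+2$, two are congruent modulo $2^{M+1}$, so there are indices $2\le i<j$ with
$$b:=z_{i+1}+z_{i+2}+\cdots+z_{j}\equiv 0\pmod{2^{M+1}}.$$
Then $b\in FS\left(\langle z_{n}\rangle_{n=1}^{\infty}\right)\subseteq A$, so $b$ is a positive multiple of $2^{M+1}$ whose bottom bit, by the observation above, is in an even position, which is therefore $\ge M+1>M$. Since $z_{1}<2^{M+1}$ and $2^{M+1}\mid b$, no carry occurs in the addition, so the set of positions of the $1$-bits of $z_{1}+b$ is the disjoint union of those of $z_{1}$ and of $b$; thus $z_{1}+b$ has a $1$-bit in an even position $>M$ lying above its $1$-bit in the odd position $M$, and by the description of $A$ this is impossible for a member of $A$, i.e.\ $z_{1}+b\notin A$. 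But $z_{1}+b=z_{1}+z_{i+1}+\cdots+z_{j}$ is a sum over the pairwise distinct indices $1,i+1,\ldots,j$ (note $i+1\ge 3$), so $z_{1}+b\in FS\left(\langle z_{n}\rangle_{n=1}^{\infty}\right)\subseteq A$ --- a contradiction. Hence $A$ is not an $IP$-set.

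The main obstacle is essentially bookkeeping, concentrated in the last paragraph: one must check that the block $b=z_{i+1}+\cdots+z_{j}$ really is an element of $FS\left(\langle z_{n}\rangle\right)$ built from indices disjoint from $\{1\}$ (so that $z_{1}+b$ is again in $FS$), and that the estimate $z_{1}<2^{M+1}\le b$ genuinely precludes carries, so that the ``stray'' even-position bit of $b$ above the top odd-position bit of $z_{1}$ survives into $z_{1}+b$. Granting (i) and (ii), the set $C=\mathbb{N}\setminus A$ is an $IP^{\star}$-set that is not an $IP^{2\star}$-set, which proves the theorem; by the discussion preceding it, together with Theorem~\ref{Cha IPn*}, this is precisely the ingredient needed to answer Goswami's Question~\ref{Goswami's Question} in the negative.
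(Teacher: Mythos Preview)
Your proof is correct. It does, however, take a different route from the paper's own proof of this theorem. The paper argues non-constructively: since $E(\beta\mathbb{N})$ is not a subsemigroup of $\beta\mathbb{N}$, there are idempotents $p,q$ with $p+q\notin E(\beta\mathbb{N})$, so some $A\in p+q$ is an $IP^{2}$-set that is not an $IP$-set, and its complement works. You instead use the explicit set $A$ that the paper treats only in the \emph{following} theorem (Theorem~\ref{IP infty set}); this is perfectly legitimate, and in fact your pigeonhole/no-carry argument for ``$A$ is not an $IP$-set'' is cleaner and more self-contained than the paper's case analysis there. What the paper's ultrafilter proof buys is brevity and independence from any particular example; what your approach buys is an explicit witness which, as the paper also observes, is simultaneously an $IP_{\infty}$-set, so the resulting $IP^{\star}$-set additionally fails to be $IP_{r}^{\star}$ for every $r$.
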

 
 \begin{proof}
 	By \cite[exercise 6.1.4]{HS}, $E\left(\beta \mathbb N \right)$ is not a semigroup of $\beta\mathbb{N}$. Then, we can find two idempotents , $p,q\in E\left( \beta\mathbb{N}\right)$, such that $p+q\notin E\left( \beta\mathbb{N}\right)$. Then there exists a set $A\subset \mathbb{N} $ with $A\in p+q$,  which is not an $IP$-set. As, $A\in p+q$, then $A$ is an $IP^{2}$-set. By the characterization of $IP^{2}$-set, we have, two sequences $ \langle x_{n}\rangle_{n=1}^{\infty}$ and $\langle y_{n}\rangle_{n=1}^{\infty} $ such that $$\left\{ \sum_{t\in H_{1}}x_{t}+\sum_{t\in H_{2}}y_{t}:H_{1},H_{2}\in\mathcal{P}_{f}\left(\mathbb{N}\right)\,\text{and}\,H_{1}<H_{2}\right\} \subseteq A. $$ Then it is clear that $B=\mathbb{N}\setminus A$ is an $IP^{\star}$-set but not $IP^{2\star}$-set.
 	
 \end{proof} 
 
 From the  following theorem, we can find an $IP^{\star}$-set,  which is neither an  $IP^{2\star}$-set nor  an $IP_{r}^{\star}$-set for any $r\in\mathbb{N}$ in a constructive way.
 \begin{theorem}\label{IP infty set}
 	Let two sequences $ \langle2^{2t}\rangle_{t=1}^{\infty}$ and $\langle2^{2t+1}\rangle_{t=1}^{\infty} $ in $ \mathbb{N}$ . then $$A= \left\{ \sum_{t\in H_{1}}2^{2n}+\sum_{t\in H_{2}}2^{2n+1}:H_{1}<H_{2}\right\} $$ is $IP_{\infty}$-set but not $IP$-set. As a consequences $A$ is  $IP^{\star}$-set which is not $IP^{2\star}$-set  and at the same time,  not an $IP_{r}^{\star}$-set for any $r\in\mathbb{N}$.
 \end{theorem}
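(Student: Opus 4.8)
The plan is to establish three things in turn: (i) $A$ is an $IP_\infty$-set; (ii) $A$ is not an $IP$-set; and (iii) the remaining assertions are then purely formal, where — as the discussion preceding the theorem makes clear — the set at issue is $B:=\mathbb N\setminus A$ rather than $A$ itself. For $N\in\mathbb N$ I will write $\mathrm{supp}(N)$ for the set of positions of $1$'s in the binary expansion of $N$, and $m(N),M(N)$ for its least and greatest elements. The structural fact underlying everything is that every $N\in A$ has $\mathrm{supp}(N)\subseteq\{2,3,4,\dots\}$, contains at least one even and at least one odd position (since $H_1,H_2\in\mathcal P_f(\mathbb N)$ are nonempty), and has every even position of $\mathrm{supp}(N)$ strictly below every odd position of $\mathrm{supp}(N)$; this is immediate because the powers $2^{2t},2^{2t+1}$ ($t\ge1$) are pairwise distinct, so the defining sum of a member of $A$ is already its binary expansion.

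For (i), fix $r\in\mathbb N$ and set $a_i=2^{2i}+2^{2(r+i)+1}$ for $1\le i\le r$. For nonempty $F\subseteq\{1,\dots,r\}$ the numbers $a_i$ ($i\in F$) have pairwise disjoint binary supports, so $\sum_{i\in F}a_i$ has support $\{2i:i\in F\}\cup\{2(r+i)+1:i\in F\}$, in which every even position is at most $2r$ and every odd position is at least $2r+3$; taking $H_1=F$ and $H_2=\{r+i:i\in F\}$ shows $\sum_{i\in F}a_i\in A$. Hence $FS(\langle a_i\rangle_{i=1}^r)\subseteq A$, so $A$ is $IP_r$, and as $r$ is arbitrary $A$ is $IP_\infty$. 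Note also that $A$ is, by its very definition, an $IP^2$-set (put $x_{1,t}=2^{2t}$, $x_{2,t}=2^{2t+1}$ in the definition of an $IP^2$-set).

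For (ii), suppose toward a contradiction that $FS(\langle x_n\rangle_{n=1}^\infty)\subseteq A$. I first use the elementary lemma that every $IP$-set contains, for each $k$, an element divisible by $2^k$: among the partial sums of its generating sequence two are congruent mod $2^k$, and their difference is such an element (equivalently, $2^k\mathbb N$ belongs to every idempotent of $\beta\mathbb N$). Applying this with $k=M(x_1)+1$ to the $IP$-set $FS(\langle x_n\rangle_{n=2}^\infty)\subseteq A$ yields $y\in A$ with $m(y)\ge M(x_1)+1$; then $x_1,\ y,\ x_1+y$ all lie in $A$. Since $\mathrm{supp}(x_1)\subseteq\{0,\dots,M(x_1)\}$ lies entirely below $\mathrm{supp}(y)$, the addition $x_1+y$ has no carries and $\mathrm{supp}(x_1+y)=\mathrm{supp}(x_1)\cup\mathrm{supp}(y)$. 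But $\mathrm{supp}(x_1)$ contains an odd position $o$ and $\mathrm{supp}(y)$ an even position $e$ with $o\le M(x_1)<m(y)\le e$, so $x_1+y$ has an odd bit below an even bit, contradicting $x_1+y\in A$. Thus $A$ is not an $IP$-set.

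Finally, (iii): since $A$ is not an $IP$-set, every $IP$-set meets $B=\mathbb N\setminus A$ (an $IP$-subset of $A$ would witness that $A$ is $IP$), so $B$ is $IP^\star$; and since $A$ is itself an $IP^2$-set and contains the $IP_r$-set $FS(\langle a_i\rangle_{i=1}^r)$ for every $r$, the set $B$ is disjoint from an $IP^2$-set and from an $IP_r$-set for each $r$, hence is neither $IP^{2\star}$ nor $IP_r^\star$ for any $r$. The delicate point — and the reason the argument cannot be reduced to a bare two-element statement — is that for arbitrary $a,b\in A$ one may have $a+b\in A$ (for instance $2^2+2^3$, $2^2+2^5$ and their sum $2^4+2^5$ all lie in $A$): binary carrying can restore the ``evens below odds'' pattern. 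The function of the divisibility lemma is exactly to produce a witnessing pair whose supports are so far apart that no carrying occurs, which is what forces the contradiction; I expect this to be the only step that needs real care.
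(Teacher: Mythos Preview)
Your argument is correct, and in one place actually cleaner than the paper's. For (i) your sequence $a_i=2^{2i}+2^{2(r+i)+1}$ is essentially the paper's intended construction (the paper writes $x_i=2^{2i}+2^{2(r+1)+i}$, which in fact fails for even $i$ since the second exponent is then even; your version repairs this). The genuine difference is in (ii). The paper proceeds by a two-case analysis on the generating sequence: either one can find some $x_k$ whose binary support lies entirely above that of $x_1$ (Case~I), or the ``even parts'' $H_n$ are confined to a finite set and a pigeonhole argument produces $x_s,x_t$ with $H_s=H_t$, whose sum then has only odd bits (Case~II). You bypass this split entirely by invoking the single fact that every $IP$-set meets $2^k\mathbb N$ (equivalently, $2^k\mathbb N$ lies in every additive idempotent), which hands you a $y\in FS(\langle x_n\rangle_{n\ge 2})$ with support strictly above that of $x_1$ and hence a carry-free contradiction. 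Your route is shorter and avoids the bookkeeping needed to make the Case~I/Case~II dichotomy fully rigorous; the paper's route has the minor virtue of staying entirely within the given sequence without an auxiliary divisibility lemma.

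One small correction to your closing parenthetical: under the paper's convention $H_1<H_2$ means $\max H_1<\min H_2$, so membership in $A$ requires the largest even bit to lie at least two places below the smallest odd bit, not merely strictly below it. Thus $2^2+2^3$ and $2^4+2^5$ are \emph{not} in $A$, and your illustrative triple fails. A genuine instance of the carrying phenomenon is $a=2^2+2^7$, $b=2^4+2^7+2^{11}$, $a+b=2^2+2^4+2^8+2^{11}$, all three lying in $A$. This slip is harmless for your main argument, since in step~(ii) you only use the (correct) implication that an odd bit strictly below an even bit forces $x_1+y\notin A$.
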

 
 \begin{proof}
 	Let $A=\left\{ \sum_{t\in H}2^{2n}+\sum_{t\in K}2^{2n+1}:H<K\right\}$ be an $IP$-set. Then there exists an sequence $\langle x_{n}\rangle_{n=1}^{\infty}$ and $ FS\left(\langle x_{n}\rangle_{n=1}^{\infty}\right)\subset A$. Let $$ x_{1}=\sum_{n\in H_{1}}2^{2n}+\sum_{n\in K_{1}}2^{2n+1}  \text{ with } H_{1}<K_{1}.$$ \\  
 \textbf{Case-I}:	As $ \langle x_{n}\rangle_{n=1}^{\infty}$ is infinite , we can find for some $k\in\mathbb{N}$ $$x_{k}=\sum_{n\in H_{k}}2^{2n}+\sum_{n\in K_{k}}2^{2n+1} \text{ with }  H_{k}<K_{k}.$$  Now  $$ x_{1}+x_{k}=\sum_{n\in H_{1}\cup H_{k}}2^{2n}+\sum_{n\in K_{1}\cup K_{k}}2^{2n+1}  \text{ but }  H_{1}\cup H_{k}\nless K_{1}\cup K_{k}. $$ Hence $ x_{1}+x_{k}\notin A$.\\
 
 \textbf{Case-II}:	As $\left\{ 1,2,\ldots,\max H_{2}\right\}  $ is finite set, we can find $$ x_{s}=\sum_{n\in H_{s}}2^{2n}+\sum_{n\in K_{s}}2^{2n+1}  \text{ with }  H_{s}<K_{s}.$$ and $$ x_{t}=\sum_{n\in H_{t}}2^{2n}+\sum_{n\in K_{t}}2^{2n+1} \text{ with }  H_{t}<K_{t}.$$ Where $H=H_{s}=H_{t}$  and $K_{t}<K_{s}$. In this case $ x_{s}+x_{t}=\sum_{n\in H\cup K_{t}\cup K_{s}}2^{2n+1}$ is not in $A$.\\
 	Let $ r\in\mathbb{N} $, $ x_{i}=2^{2i}+2^{2\left(r+1\right)+i} $ for $ i=1,2,\ldots,r$.  Then $FS\left(\langle x_{n}\rangle_{n=1}^{r}\right)\subset A$ and $ A $ is an $IP_{\infty}$-set. 
 \end{proof}

 \section{Zigzag finite sums and products family}
 
We start this section with the following definition:
 \begin{definition}\label{ ZFSP-family}
 	 Let $\mathcal{F}\subseteq\mathcal{P}\left(\mathbb{N}\right)\setminus\left\{ \emptyset\right\}  $ and $ \mathcal{F} $ is called $ ZFSP$-family, if $ \mathcal{F} $ satisfies the following properties
 	\begin{itemize}
 		\item[(a)] For each $ A\in\mathcal{F}$, $A$ is an $ IP^{\star} $-set in $ \left(\mathbb{N},+\right)$
 		\item[(b)] For each $ A\in\mathcal{F} $ , there exists $ C\in\mathcal{F}$ such that, $ C\subseteq A$ and for $ n\in C, -n+C\in\mathcal{F}$
 		\item[(c)] For $ A,B\in\mathcal{F}$, $A\cap B\in\mathcal{F}$
 		\item[(d)] For $ n\in\mathbb{N}$ and $A\in\mathcal{F}$, $n^{-1}A\in\mathcal{F}$
 	\end{itemize}
Let $\mathcal{F}_{1}=\left\{\textbf{ collection of all dynamical } IP^{\star }\textbf{-sets }\right\}$ forms a  $ZFSP$ family by \cite{G}. By \cite{LL}, $\mathcal{F}_{2}=\left\{\textbf{ collection of all topological dynamical } IP^{\star }\textbf{-sets }\right\}$ forms a  $ZFSP$ family. The following theorem shows that any member of a $ZFSP$ family  contains zigzag finite sums and products  of a sum subsystem of any finite number of sequences.	 
 \end{definition}
 
 \begin{theorem}\label{Zig-zag in ZFSP}
 	 Let $ l\in\mathbb{N}$ and $A\subseteq\mathbb{N}$ is in a $ZFSP$ family set in $ \left(\mathbb{N},+\right)$. Then for any $ l $ sequences $ \langle x_{n}^{\left(1\right)}\rangle_{n=1}^{\infty},\langle x_{n}^{\left(2\right)}\rangle_{n=1}^{\infty},\ldots,\langle x_{n}^{\left(l\right)}\rangle_{n=1}^{\infty}$ in $ \mathbb{N}$ there exists a $ l $sum subsystems $ \langle y_{n}^{\left(i\right)}\rangle_{n=1}^{\infty}$ of $ \langle x_{n}^{\left(i\right)}\rangle_{n=1}^{\infty} $ for each $ i\in\left\{ 1,2,\ldots,l\right\}  $ such that $$ZFS\left(\langle y_{n}^{\left(i\right)}\rangle_{i,n=1,1}^{l,\infty}\right)\bigcup ZAP\left(\langle y_{n}^{\left(i\right)}\rangle_{i,n=1,1}^{l,\infty}\right)\subset A.$$
 \end{theorem}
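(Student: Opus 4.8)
The plan is to build the common blocks $H_1<H_2<\cdots$ — and hence the subsystems $y_n^{(i)}=\sum_{t\in H_n}x_t^{(i)}$ — by induction on $n$, carrying along at stage $n$ not a single master set but a finite family of auxiliary sets in $\mathcal{F}$ indexed by the zigzag partial sums produced so far. Write $\mathcal{S}_n$ for $\{0\}$ together with all $\sum_{m\in H}y_m^{(c(m))}$ over $\emptyset\ne H\subseteq\{1,\dots,n\}$ and $c\colon H\to\{1,\dots,l\}$, and $\mathcal{P}_n$ for $\{1\}$ together with the analogous zigzag partial products; both are finite. Call $D\in\mathcal{F}$ \emph{good} if $-m+D\in\mathcal{F}$ for every $m\in D$. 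Then property~(b) says exactly that every member of $\mathcal{F}$ contains a good member, and by property~(c) an intersection of finitely many good members of $\mathcal{F}$ is again a good member.

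The induction hypothesis at stage $n$ is: $H_1<\cdots<H_{n-1}$ are chosen; $\mathcal{S}_{n-1}\setminus\{0\}\subseteq A$ and $\mathcal{P}_{n-1}\setminus\{1\}\subseteq A$; and for each $s\in\mathcal{S}_{n-1}$ there is a good $D_n^{s}\in\mathcal{F}$ with $s+D_n^{s}\subseteq A$ (so in particular $D_n^{0}\subseteq A$). The base case is immediate: apply~(b) to $A$ to get a good $D_1^{0}\subseteq A$ in $\mathcal{F}$, and note $\mathcal{S}_0=\{0\}$, $\mathcal{P}_0=\{1\}$. For the inductive step put
$$B_n=\Bigl(\bigcap_{s\in\mathcal{S}_{n-1}}D_n^{s}\Bigr)\cap\Bigl(\bigcap_{q\in\mathcal{P}_{n-1}}q^{-1}A\Bigr),$$
which is in $\mathcal{F}$ by~(c), (d) and the hypothesis, hence an $IP^{\star}$-set by~(a). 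A short argument shows $B_n^{\,l}$ is an $IP^{\star}$-set of $(\mathbb{N}^{l},+)$: each coordinate projection $\beta(\mathbb{N}^{l})\to\beta\mathbb{N}$ is a continuous homomorphism, so it sends an idempotent $r$ of $\beta(\mathbb{N}^{l})$ to an idempotent of $\beta\mathbb{N}$, which contains $B_n$; hence $B_n^{\,l}=\bigcap_{j=1}^{l}\pi_j^{-1}[B_n]\in r$. Intersecting $B_n^{\,l}$ with the $IP$-set $FS\bigl(\langle(x_t^{(1)},\dots,x_t^{(l)})\rangle_{t\ge\max H_{n-1}+1}\bigr)$ of $(\mathbb{N}^{l},+)$ yields a finite nonempty $H_n$ with $\min H_n>\max H_{n-1}$ and $\sum_{t\in H_n}x_t^{(i)}\in B_n$ for all $i$; set $y_n^{(i)}=\sum_{t\in H_n}x_t^{(i)}$.

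Now verify the hypothesis at $n+1$. For $s\in\mathcal{S}_{n-1}$ and any $i$ we have $y_n^{(i)}\in D_n^{s}$, so $s+y_n^{(i)}\in s+D_n^{s}\subseteq A$; for $q\in\mathcal{P}_{n-1}$ and any $i$ we have $y_n^{(i)}\in q^{-1}A$, so $q\,y_n^{(i)}\in A$. Since every element of $\mathcal{S}_n$ not already in $\mathcal{S}_{n-1}$ has the form $s+y_n^{(i)}$, and likewise for $\mathcal{P}_n$, this gives $\mathcal{S}_n\setminus\{0\}\subseteq A$ and $\mathcal{P}_n\setminus\{1\}\subseteq A$. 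For the new continuation sets, keep $D_{n+1}^{s'}=D_n^{s'}$ when $s'\in\mathcal{S}_{n-1}$; when $s'=s+y_n^{(i)}$ with $s\in\mathcal{S}_{n-1}$, use that $y_n^{(i)}\in D_n^{s}$ and $D_n^{s}$ is good to get $-y_n^{(i)}+D_n^{s}\in\mathcal{F}$, then apply~(b) to obtain a good $D_{n+1}^{s'}\in\mathcal{F}$ inside $-y_n^{(i)}+D_n^{s}$, so that $s'+D_{n+1}^{s'}\subseteq s+D_n^{s}\subseteq A$; if some $s'$ is obtained in several ways (or coincides with an element of $\mathcal{S}_{n-1}$), intersect the finitely many candidates, which stays good and in $\mathcal{F}$ by~(c). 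This completes the induction, and then for any $\emptyset\ne H\in\mathcal{P}_f(\mathbb{N})$ and $c\colon H\to\{1,\dots,l\}$, taking $n=\max H$ places $\sum_{m\in H}y_m^{(c(m))}$ in $\mathcal{S}_n\setminus\{0\}\subseteq A$ and $\prod_{m\in H}y_m^{(c(m))}$ in $\mathcal{P}_n\setminus\{1\}\subseteq A$, which is the desired conclusion.

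The crux — and the reason the $ZFSP$ axioms take the form they do — is that $\mathcal{F}$ is not assumed closed under additive translation, so one cannot simply translate a single master set as the partial sums grow; indexing a finite family of good sets by the partial sums and regenerating goodness at each step through~(b) is precisely what avoids this, while~(d) handles all the product constraints unconditionally and~(a) (via the product-space remark) supplies one block $H_n$ serving all $l$ sequences at once. The only other point needing care is the passage to $(\mathbb{N}^{l},+)$, which is routine.
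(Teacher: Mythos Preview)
Your proof is correct, but it takes a more elaborate route than the paper's in one respect and adds a genuine refinement in another. The paper applies axiom~(b) a single time at the outset to obtain one good set $C\subseteq A$ (in your terminology) and then simply maintains the invariant that \emph{all} zigzag partial sums and products lie in $C$: at stage $m$ one passes to $D=C\cap\bigcap_{s}(-s+C)\cap\bigcap_{q}q^{-1}C$, where $s$ ranges over the previous zigzag sums and $q$ over the previous zigzag products; since each such $s$ already lies in $C$, goodness of $C$ gives $-s+C\in\mathcal{F}$ directly, so no indexed family $\{D_n^{s}\}$ and no repeated invocation of~(b) is needed. Your concern that ``$\mathcal{F}$ is not closed under additive translation'' is thus handled simply by keeping the partial sums inside the single good $C$. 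On the other hand, the paper chooses the blocks $H_{m+1}^{(i)}$ separately for each $i$ (since $D$ is $IP^{\star}$ it meets the tail $FS$ of each individual sequence), whereas your passage to $(\mathbb{N}^{l},+)$ via the continuous projection homomorphisms $\beta(\mathbb{N}^{l})\to\beta\mathbb{N}$ yields a single common block $H_n$ serving all $l$ sequences simultaneously---a slight strengthening (a diagonal sum subsystem) obtained at essentially no extra cost.
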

 
 \begin{proof}
 	 Let $\mathcal{F}$ be the given $ZFSP$ family. Form the definition of $ZFSP$ family, We know from that if $A\in\mathcal{F}$, there exists $ C\in\mathcal{F}$ such that, $C\subseteq A$ and for $n\in C$, $ -n+C\in\mathcal{F}$.
 	
 	 Choose $ \langle H_{1}^{\left(i\right)}\rangle_{i=1}^{l}\subset\mathcal{P}_{f}\left(\mathbb{N}\right)$ such that $ \sum_{t\in H_{1}^{\left(i\right)}}x_{t}^{\left(i\right)}=y_{1}^{\left(i\right)}\in C$ for each $ i\in\left\{ 1,2,\ldots,l\right\} $. Let us assume for $ m\in\mathbb{N}$, we have a sequence $ \langle y_{n}^{\left(i\right)}\rangle_{n,i=1,1}^{m,l}$ in $\mathbb{N}$ and $ \langle H_{n}^{\left(i\right)}\rangle_{n,i=1,1}^{m,l}$ in $\mathcal{P}_{f}\left(\mathbb{N}\right)$ such that
 	\begin{itemize}
 		\item[1] For each $ j\in\left\{ 1,2,\ldots,m-1\right\}  $ and $ i\in\left\{ 1,2,\ldots,l\right\}  $ $\max H_{j}^{\left(i\right)}<\min H_{j+1}^{\left(i\right)}$.
 		\item[2] If for each $j\in\left\{ 1,2,\ldots,m\right\} $, $ y_{j}^{\left(i\right)}=\sum_{t\in H_{j}^{\left(i\right)}}x_{t}^{\left(i\right)} $
 	\end{itemize}
 		 then,
 		 $$ B=ZFS\left(\langle y_{j}^{\left(i\right)}\rangle_{j,i=1,1}^{m,l}\right)\bigcup ZFP\left(\langle y_{j}^{\left(i\right)}\rangle_{j,i=1,1}^{m,l}\right)\subset C$$
 		 Now we have, $$D=C\cap\bigcap_{n\in E_{1}}\left(-n+C\right)\cap\bigcap_{n\in E_{2}}\left(n^{-1}C\right)$$ is in $\mathcal{F}$ . Now choose a sequence $\langle H_{m+1}^{\left(i\right)}\rangle_{i=1}^{l}$ such that $ \max H_{m}^{\left(i\right)}<\min H_{m+1}^{\left(i\right)}$ and $y_{m+1}^{\left(i\right)}=\sum_{t\in H_{m+1}^{\left(i\right)}}x_{t}^{\left(i\right)}\in A$, $ i\in\left\{ 1,2,\ldots,l\right\} $. Then one can now check this choice of $ \langle y_{m+1}^{\left(i\right)}\rangle_{i=1}^{l}$ completes the induction. So we have the result
 	
 \end{proof}
 
Let us recall the dynamically $IP^{\star}$-sets. A subset $C$ of $ \mathbb{N}$ is dynamical $ IP^{\star}$- set iff there exist a measure preserving system $ \left(X,\mathcal{B},\mu,T\right)$ and an $ A\in \mathcal{B}$ with $ \mu\left(A\right)>0$ such that $\left\{ n\in\mathbb{N}:\mu\left(A\cap T^{-n}A\right)>0\right\} \subseteq C$. Now, if we cosider $ A,B\in\ \mathcal{B}$ with $ \mu\left(A\right)\mu\left(B\right)>0$ then the $ E=\left\{ n\in\mathbb{N}:\mu\left(A\cap T^{-n}B\right)>0\right\} $ may not be an $ IP^{\star}$-set, even $E$ may be an empty. For example, if we cosider a measure preserving system $ \left(X,\mathcal{B},\mu,I\right) $ where $ I $ is identity transformation on $X $. Then for any $A,B\in\mathcal{B} $, with $A\cap B=\emptyset$ we have $\left\{ n\in\mathbb{N}:\mu\left(A\cap I^{-n}B\right)\right\} =\emptyset$.

\begin{definition}
	 A measure preserving system $\left(X,\mathcal{B},\mu, T\right)$ is called mild mixing, if for any $\epsilon>0$ and $A,B\in\mathcal{B}$ with $\mu\left(A\right)\mu\left(B\right)>0$ $$\left\{ n:|\mu(A\cap T^{-n}B)-\mu(A)\mu(B)|<\epsilon\right\} $$ is an $IP^{\star}$-set.
\end{definition}
From the above definition, it is clear that for a mild mixing $\left(X,\mathcal{B},\mu, T\right)$ for $ A,B\in\mathcal{B}$ with $ \mu\left(A\right)\mu\left(B\right)>0$, $ \left\{ n\in\mathbb{N}:\mu\left(A\cap T^{-n}B\right)>0\right\} $ is an $IP^{\star}$-set

\begin{definition}
	 Let $C$ be a subset of $\mathbb{N}$, is called $ MIP^{\star}$-sets iff there exist a mild mixing system $ \left(X,\mathcal{B},\mu,T\right)$ and $A,B\in\mathcal{B}$ with $\mu\left(A\right)\mu\left(B\right)>0$ such that $\left\{ n\in\mathbb{N}:\mu\left(A\cap T^{-n}B\right)>0\right\} \subseteq C$.
\end{definition}

From \cite[Proposition 9.23]{F}, we get the following:
\begin{lemma}\label{product of mild mixing}
	Product of two mild mixing systems is a mild mixing.
\end{lemma}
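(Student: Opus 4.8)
The plan is to reduce mild mixing to a statement about idempotent ultrafilters in $\beta\mathbb{N}$, verify that statement for the product system first on measurable rectangles, and then remove the restriction to rectangles by an approximation that is uniform in $n$.

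First I would recall the standard facts (\cite[Chapters 3 and 5]{HS}) that a set $S\subseteq\mathbb{N}$ is an $IP^{\star}$-set if and only if $S\in p$ for every idempotent $p\in E(\beta\mathbb{N})$, and that for a bounded real sequence $\langle a_n\rangle_{n=1}^{\infty}$ the limit $p\text{-}\lim_n a_n$ exists for every such $p$. Combining these, for $L\in\mathbb{R}$ the set $\{n\in\mathbb{N}:|a_n-L|<\epsilon\}$ is an $IP^{\star}$-set for every $\epsilon>0$ if and only if $p\text{-}\lim_n a_n=L$ for every $p\in E(\beta\mathbb{N})$. Taking $a_n=\mu(A\cap T^{-n}B)$, this says that a measure preserving system $(X,\mathcal{B},\mu,T)$ is mild mixing if and only if
\[
p\text{-}\lim_n \mu\bigl(A\cap T^{-n}B\bigr)=\mu(A)\,\mu(B)
\quad\text{for all } A,B\in\mathcal{B} \text{ and all } p\in E(\beta\mathbb{N})
\]
(the case $\mu(A)\mu(B)=0$ being trivial, since then $\mu(A\cap T^{-n}B)=0$ for all $n$). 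This is the reformulation I would use.

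Now let $(X_1,\mathcal{B}_1,\mu_1,T_1)$ and $(X_2,\mathcal{B}_2,\mu_2,T_2)$ be mild mixing, put $(X,\mathcal{B},\mu,T)=(X_1\times X_2,\,\mathcal{B}_1\otimes\mathcal{B}_2,\,\mu_1\times\mu_2,\,T_1\times T_2)$ (which is again a measure preserving system), and fix $p\in E(\beta\mathbb{N})$. For measurable rectangles $A=A_1\times A_2$ and $B=B_1\times B_2$ one has $\mu(A\cap T^{-n}B)=\mu_1(A_1\cap T_1^{-n}B_1)\,\mu_2(A_2\cap T_2^{-n}B_2)$; since $p\text{-}\lim$ is multiplicative on bounded sequences, the mild mixing of the two factors gives $p\text{-}\lim_n\mu(A\cap T^{-n}B)=\mu_1(A_1)\mu_1(B_1)\mu_2(A_2)\mu_2(B_2)=\mu(A)\mu(B)$. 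Because every finite union of measurable rectangles is a finite disjoint union of measurable rectangles, for $A,B$ in the algebra $\mathcal{A}$ of such sets $\mu(A\cap T^{-n}B)$ is a finite sum of terms of the above product form, so additivity of $p\text{-}\lim$ over finite sums yields $p\text{-}\lim_n\mu(A\cap T^{-n}B)=\mu(A)\mu(B)$ for all $A,B\in\mathcal{A}$.

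Finally I would pass from $\mathcal{A}$ to all of $\mathcal{B}$. Given $A,B\in\mathcal{B}$ and $\delta>0$, density of $\mathcal{A}$ in the generated $\sigma$-algebra provides $A',B'\in\mathcal{A}$ with $\mu(A\triangle A')<\delta$ and $\mu(B\triangle B')<\delta$; since $T$ is measure preserving, $\mu(T^{-n}B\triangle T^{-n}B')<\delta$ for every $n$, hence $|\mu(A\cap T^{-n}B)-\mu(A'\cap T^{-n}B')|<2\delta$ for all $n$ and $|\mu(A)\mu(B)-\mu(A')\mu(B')|<2\delta$. Together with the identity already proved on $\mathcal{A}$ this forces $|p\text{-}\lim_n\mu(A\cap T^{-n}B)-\mu(A)\mu(B)|\le 4\delta$, and letting $\delta\to0$ gives the identity for arbitrary $A,B\in\mathcal{B}$. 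As $p\in E(\beta\mathbb{N})$ was arbitrary, the reformulation from the first step shows $T_1\times T_2$ is mild mixing. The point requiring care is this last step: the approximation error between $\mu(A\cap T^{-n}B)$ and $\mu(A'\cap T^{-n}B')$ must be bounded independently of $n$, which is exactly where measure preservation is used; everything else is routine bookkeeping with $p$-limits once the ultrafilter description of $IP^{\star}$-sets is in hand.
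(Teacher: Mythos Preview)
Your argument is correct. The paper, however, does not actually prove this lemma: it simply quotes it from Furstenberg's book \cite[Proposition 9.23]{F} without giving any argument, so there is no ``paper's own proof'' to compare with.

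What you have supplied is therefore genuinely additional content. Your route---recasting mild mixing as the statement that $p\text{-}\lim_n \mu(A\cap T^{-n}B)=\mu(A)\mu(B)$ for every idempotent $p$, checking this first on rectangles via multiplicativity of $p$-limits, extending to the algebra of finite disjoint unions of rectangles by additivity, and then passing to the full product $\sigma$-algebra by an approximation that is uniform in $n$ thanks to measure preservation---is a clean and self-contained proof that fits the ultrafilter language of the present paper well. Furstenberg's treatment in \cite{F} goes instead through the characterisation of mild mixing by the absence of nontrivial rigid factors and a Hilbert-space argument; your approach avoids that detour at the cost of a small density/approximation step, which you handle correctly (the key point, as you note, is that $\mu(T^{-n}B\triangle T^{-n}B')=\mu(B\triangle B')$ is independent of $n$).
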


\begin{lemma}\label{power of mild mixing}
	Let $ \left(X,\mathcal{B},\mu,T\right)$ be a mild mixing system. Then for any $ m\in\mathbb{N}$, $ \left(X,\mathcal{B},\mu,T^{m}\right)$ is also a mild mixing system.
\end{lemma}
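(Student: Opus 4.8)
The plan is to reduce the statement to one elementary closure property of $IP^{\star}$-sets: if $E\subseteq\mathbb{N}$ is an $IP^{\star}$-set and $m\in\mathbb{N}$, then $m^{-1}E=\{n\in\mathbb{N}:mn\in E\}$ is again an $IP^{\star}$-set. First I would note that $\left(X,\mathcal{B},\mu,T^{m}\right)$ is automatically a measure preserving system, since a composition of finitely many measure preserving transformations is measure preserving and the probability measure $\mu$ is unchanged; so the only thing requiring proof is the mild mixing property of $T^{m}$.

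Fix $\epsilon>0$ and $A,B\in\mathcal{B}$ with $\mu\left(A\right)\mu\left(B\right)>0$. Since $\left(X,\mathcal{B},\mu,T\right)$ is mild mixing, the set $E=\{k\in\mathbb{N}:|\mu(A\cap T^{-k}B)-\mu(A)\mu(B)|<\epsilon\}$ is an $IP^{\star}$-set. Using $\left(T^{m}\right)^{-n}=T^{-mn}$, the set that must be shown to be $IP^{\star}$ for the system $\left(X,\mathcal{B},\mu,T^{m}\right)$ is precisely $\{n\in\mathbb{N}:|\mu(A\cap T^{-mn}B)-\mu(A)\mu(B)|<\epsilon\}=\{n\in\mathbb{N}:mn\in E\}=m^{-1}E$. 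Hence the lemma follows once it is known that $m^{-1}E$ is an $IP^{\star}$-set.

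To prove that last fact, let $F$ be an arbitrary $IP$-set and choose a sequence $\langle x_{t}\rangle_{t=1}^{\infty}$ in $\mathbb{N}$ with $FS\left(\langle x_{t}\rangle_{t=1}^{\infty}\right)\subseteq F$. The dilated sequence $\langle mx_{t}\rangle_{t=1}^{\infty}$ generates the $IP$-set $FS\left(\langle mx_{t}\rangle_{t=1}^{\infty}\right)=m\cdot FS\left(\langle x_{t}\rangle_{t=1}^{\infty}\right)$, because $m\sum_{t\in H}x_{t}=\sum_{t\in H}mx_{t}$ for every $H\in\mathcal{P}_{f}\left(\mathbb{N}\right)$. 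As $E$ is $IP^{\star}$, it meets this $IP$-set, so there is $H\in\mathcal{P}_{f}\left(\mathbb{N}\right)$ with $\sum_{t\in H}mx_{t}\in E$; setting $a=\sum_{t\in H}x_{t}$ gives $ma\in E$, i.e.\ $a\in m^{-1}E$, while also $a\in FS\left(\langle x_{t}\rangle_{t=1}^{\infty}\right)\subseteq F$. Thus $m^{-1}E\cap F\neq\emptyset$, so $m^{-1}E$ is an $IP^{\star}$-set, which shows $T^{m}$ is mild mixing and completes the argument.

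I do not expect a genuine obstacle; the one step needing a moment's care is the identity $FS\left(\langle mx_{t}\rangle_{t=1}^{\infty}\right)=m\cdot FS\left(\langle x_{t}\rangle_{t=1}^{\infty}\right)$, which is exactly what transports an $IP$-set in the variable $n$ to an $IP$-set in the variable $k=mn$. Alternatively, one may argue in $\beta\mathbb{N}$: multiplication by $m$ extends to a continuous homomorphism of $\left(\beta\mathbb{N},+\right)$, hence carries every idempotent $p$ to an idempotent, and then $E\in p$ for all idempotents of $\beta\mathbb{N}$ forces $m^{-1}E\in p$ for all idempotents, i.e.\ $m^{-1}E$ is $IP^{\star}$.
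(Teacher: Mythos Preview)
Your proof is correct and follows essentially the same route as the paper: both reduce the mild mixing of $T^{m}$ to the identity $\{n:|\mu(A\cap T^{-mn}B)-\mu(A)\mu(B)|<\epsilon\}=m^{-1}E$ and then invoke the closure of $IP^{\star}$-sets under $E\mapsto m^{-1}E$. The paper simply cites this closure property as known, whereas you additionally supply a direct combinatorial proof (and an ultrafilter alternative), which is more than was needed but perfectly fine.
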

\begin{proof}
	For any $\epsilon>0$ and $A,B\in\mathcal{B}$ with $\mu\left(A\right)\mu\left(B\right)>0$,  we have to show that $\left\{ n:|\mu(A\cap T^{-mn}B)-\mu(A)\mu(B)|<\epsilon\right\} $ is an $IP^{\star}$-set. Now $E=\left\{ n:|\mu(A\cap T_{n}^{-1}B)-\mu(A)\mu(B)|<\epsilon\right\} $ is an $ IP^{\star}$-set, which implies $ m^{-1}E=\left\{ n:|\mu(A\cap T^{-mn}B)-\mu(A)\mu(B)|<\epsilon\right\} $ and we get the required result as, $ m^{-1}E$ is an $IP^{\star}$-set.
\end{proof}

\begin{theorem}
 The measure preserving dynamical system $\left(X,B,\mu,T\right)$ is mild mixing iff for any $A_{0},A_{1},\ldots\,A_{k} \in\mathcal{B}$ with $\mu\left(A_{0}\right)\mu\left(A_{1}\right)\cdots\mu\left(A_{k}\right)>0$ and $n_{1},\ldots,n_{k}\in\mathbb{N}$ with $0=e_{0}<e_{1}<\ldots<e_{k}$ any $p\in E \left(\beta \mathbb{N}\right)$, $$p\text{-}\lim_{n}\mu\left(\bigcap_{i=0}^{k}T^{-ne_{i}}A_{i}\right)=\prod_{i=0}^{k}\mu\left(A_{i}\right).$$
\end{theorem}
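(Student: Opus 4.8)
The plan is to prove the two directions separately; the implication ``mild mixing $\Rightarrow$ the $p$-limit identity'' is the substantial one, and the converse is an immediate specialisation.

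\emph{From the identity to mild mixing.} I would specialise the hypothesis to $k=1$, $e_0=0$, $e_1=1$ and arbitrary measurable $A_0,A_1$ with $\mu(A_0)\mu(A_1)>0$, obtaining $p\text{-}\lim_n\mu(A_0\cap T^{-n}A_1)=\mu(A_0)\mu(A_1)$ for every $p\in E(\beta\mathbb N)$. By the meaning of an ultrafilter limit this says that for each $\varepsilon>0$ the set $\{n:|\mu(A_0\cap T^{-n}A_1)-\mu(A_0)\mu(A_1)|<\varepsilon\}$ belongs to every idempotent ultrafilter, hence is an $IP^\star$-set (the case $n=1$ of the characterisation of $IP^{n\star}$-sets via idempotents recalled above, which rests on the fact that every $IP$-set lies in some idempotent of $\beta\mathbb N$, see \cite{HS}). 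That is precisely the definition of a mildly mixing system.

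\emph{From mild mixing to the identity.} Writing $U$ for the Koopman operator $Uf=f\circ T$ on $L^2(X,\mathcal B,\mu)$ and using $e_0=0$, the quantity to control is $\mu\bigl(\bigcap_{i=0}^kT^{-ne_i}A_i\bigr)=\bigl\langle\prod_{i=1}^kU^{ne_i}\mathbf 1_{A_i},\,\mathbf 1_{A_0}\bigr\rangle$. I would deduce the theorem from the following lemma: for a mildly mixing system, any $f_1,\dots,f_k\in L^\infty$, any integers $0<d_1<\dots<d_k$ and any $p\in E(\beta\mathbb N)$, the bounded sequence $\prod_{i=1}^kU^{nd_i}f_i$ has weak $L^2$-$p$-limit $\prod_{i=1}^k\int f_i\,d\mu$. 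Granting this with $d_i=e_i$ and $f_i=\mathbf 1_{A_i}$, pairing the weak limit with the fixed vector $\mathbf 1_{A_0}$ gives the theorem. Only a \emph{weak} limit is available — already $\|U^{nd_1}\mathbf 1_{A_1}-\mu(A_1)\|^2=\mu(A_1)(1-\mu(A_1))$, which is generally nonzero — but this suffices exactly because $\mathbf 1_{A_0}$ is a fixed test vector.

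I would prove the lemma by induction on $k$, using that weak $L^2$-$p$-limits of bounded sequences exist (weak compactness) and that $p$-limits of bounded scalar sequences are multiplicative. For $k=1$: by Lemma~\ref{power of mild mixing} the system $(X,\mathcal B,\mu,T^{d_1})$ is mildly mixing, and unwinding the definition on indicators and extending bilinearly and by density gives $p\text{-}\lim_n\langle U^{nd_1}f,h\rangle=\int f\,d\mu\int h\,d\mu$ for all $f,h\in L^2$; so a mean-zero $f$ has weak $p$-limit $0$. For the inductive step: expanding each $f_i=(f_i-\int f_i)+\int f_i$ and invoking the lemma for fewer than $k$ factors on every proper-subset term, it suffices to show $\prod_{i=1}^kU^{nd_i}f_i$ has weak $p$-limit $0$ whenever some $\int f_j=0$. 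Put $x_n=\prod_{i=1}^kU^{nd_i}f_i$. After the measure-preserving substitution $y\mapsto T^{-nd_1}y$ under the integral one computes $\langle x_{n+m},x_n\rangle=\bigl\langle\prod_{i=2}^kU^{n(d_i-d_1)}h_i^{(m)},\,h_1^{(m)}\bigr\rangle$ with $h_i^{(m)}=(U^{md_i}f_i)\cdot f_i\in L^\infty$; the inductive hypothesis applied to $h_2^{(m)},\dots,h_k^{(m)}$ (distinct positive exponents $d_2-d_1<\dots<d_k-d_1$) yields $p\text{-}\lim_n\langle x_{n+m},x_n\rangle=\prod_{i=1}^k\langle U^{md_i}f_i,f_i\rangle$, and then multiplicativity of $p$-limits together with the $k=1$ case for each $T^{d_i}$ gives $p\text{-}\lim_m\prod_{i=1}^k\langle U^{md_i}f_i,f_i\rangle=\prod_{i=1}^k\bigl(\int f_i\,d\mu\bigr)^2=0$. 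The van der Corput lemma for idempotent ultrafilters — if $p\text{-}\lim_m\bigl(p\text{-}\lim_n\langle x_{n+m},x_n\rangle\bigr)=0$ then $p\text{-}\lim_n x_n=0$ weakly — then closes the induction.

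The main obstacle is the van der Corput lemma for idempotent ultrafilters and the accompanying manipulation of iterated ultrafilter limits, in particular exploiting $p+p=p$ to rewrite $p\text{-}\lim_n a_n=p\text{-}\lim_m p\text{-}\lim_\ell a_{m+\ell}$ in its proof; everything else is routine once that is in hand. Conceptually the forward direction is Furstenberg's $IP$ multiple recurrence theorem for mildly mixing systems (see \cite{F}) read through the dictionary ``$IP^\star$-set $=$ member of every idempotent ultrafilter'', so one may alternatively quote it from there and merely supply this translation.
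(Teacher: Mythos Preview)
Your proposal is correct. The paper does not actually prove this theorem: its entire proof reads ``Follows from \cite[Theorem 9.27, Page-194]{F}.'' What you have written is precisely a sketch of Furstenberg's argument (recast in the language of idempotent ultrafilters rather than $IP$-limits), so you have supplied the details that the paper delegates to a citation. Your own final remark --- that one may alternatively quote \cite{F} directly and merely translate $IP^\star$ into ``member of every idempotent'' --- is exactly what the paper does.
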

\begin{proof}
	Follows from \cite[Theorem 9.27, Page-194]{F}
\end{proof}

From the above mentioned theorem,  it is an immediate consequence that for a mild mixing system  $\left(X,B,\mu,T\right)$ and for any $A_{0},A_{1},\ldots\,A_{k} \in\mathcal{B}$ with $\mu\left(A_{0}\right)\mu\left(A_{1}\right)\cdots\mu\left(A_{k}\right)>0$, the set $\left\{n:\mu\left(\bigcap_{i=0}^{k}T^{-in}A_{i}\right)>0\right\}$ is an $IP^{\star}$-set. From this we define $k$th order mild mixing $IP^{\star}$-set, abbreviated by  $k\text{-}MIP^{\star}$-set.

\begin{definition}
	 Let $C \subset\mathbb{N}$, is called $k\text{-}MIP^{\star}$-set iff there exist a mild mixing system $ \left(X,\mathcal{B},\mu,T\right)$ and $A_{0},A_{1},\ldots\,A_{k} \in\mathcal{B}$ with $\mu\left(A_{0}\right)\mu\left(A_{1}\right)\cdots\mu\left(A_{k}\right)>0$ such that $\left\{n:\mu\left(\bigcap_{i=0}^{k}T^{-in}A_{i}\right)>0\right\} \subseteq C$.
\end{definition}

Now we observe some properties of $k\text{-}MIP^{\star}$-sets.

\begin{lemma}
Let $m,k\in \mathbb{N}$. Let $A$ be a $k\text{-}MIP^{\star}$-set. Then $m^{-1}A$ is also a $k\text{-}MIP^{\star}$-set.
\end{lemma}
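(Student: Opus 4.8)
The plan is to witness $m^{-1}A$ by keeping the same measure space and the same sets $A_0,\dots,A_k$, but replacing the transformation $T$ by its $m$-th power $T^m$. First I would unpack the hypothesis: since $A$ is a $k\text{-}MIP^{\star}$-set, there is a mild mixing system $\left(X,\mathcal{B},\mu,T\right)$ together with sets $A_0,A_1,\dots,A_k\in\mathcal{B}$ satisfying $\mu\left(A_0\right)\mu\left(A_1\right)\cdots\mu\left(A_k\right)>0$ such that
$$E:=\left\{n:\mu\left(\bigcap_{i=0}^{k}T^{-in}A_i\right)>0\right\}\subseteq A.$$

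Next I would pass to the system $\left(X,\mathcal{B},\mu,T^{m}\right)$. By Lemma \ref{power of mild mixing} this is again a mild mixing system, and since the sets $A_0,\dots,A_k$ are left unchanged, the product of their measures is still positive. The only computation needed is the reindexing identity $\left(T^{m}\right)^{in}=T^{imn}$, which gives $\bigcap_{i=0}^{k}\left(T^{m}\right)^{-in}A_i=\bigcap_{i=0}^{k}T^{-imn}A_i$, and hence
$$\left\{n:\mu\left(\bigcap_{i=0}^{k}\left(T^{m}\right)^{-in}A_i\right)>0\right\}=\left\{n:\mu\left(\bigcap_{i=0}^{k}T^{-imn}A_i\right)>0\right\}=\left\{n:mn\in E\right\}=m^{-1}E.$$

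Finally, from $E\subseteq A$ I get $m^{-1}E\subseteq m^{-1}A$, so the mild mixing system $\left(X,\mathcal{B},\mu,T^{m}\right)$ together with $A_0,\dots,A_k$ witnesses that $m^{-1}A$ is a $k\text{-}MIP^{\star}$-set, as required. I do not expect a genuine obstacle here: the one substantive input is Lemma \ref{power of mild mixing} (a power of a mild mixing system is mild mixing), which is already established, and the rest is the bookkeeping identity for iterates of $T^m$ and an elementary containment of sets.
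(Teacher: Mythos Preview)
Your proof is correct and follows essentially the same approach as the paper: both keep the same measure space and sets $A_0,\dots,A_k$, pass from $T$ to $T^m$ using Lemma~\ref{power of mild mixing}, and observe that the resulting witness set is $m^{-1}E\subseteq m^{-1}A$.
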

\begin{proof}
As $A$ is $k\text{-}MIP^{\star}$-set, there exist a mild mixing system $ \left(X,\mathcal{B},\mu,T\right)$ and $A_{0},A_{1},\ldots\,A_{k} \in\mathcal{B}$ with $\mu\left(A_{0}\right)\mu\left(A_{1}\right)\cdots\mu\left(A_{k}\right)>0$ such that $$\left\{n:\mu\left(\bigcap_{i=0}^{k}T^{-in}A_{i}\right)>0\right\} \subseteq A.$$ Now if $E=\left\{n:\mu\left(\bigcap_{i=0}^{k}T^{-in}A_{i}\right)>0\right\}$, then, $$m^{-1}E=\left\{n:\mu\left(\bigcap_{i=0}^{k}T^{-imn}A_{i}\right)>0\right\} \subseteq m^{-1}A.$$ Now $m^{-1}A$ is a $k\text{-}MIP^{\star}$-set for $ \left(X,\mathcal{B},\mu,T^{m}\right)$  being a mild mixing system by Lemma \ref{power of mild mixing}.
\end{proof}

\begin{lemma}
	Let $k\in \mathbb{N}$. Let $A$ and $B$ be  $k\text{-}MIP^{\star}$-sets. Then their intersection $A\cap B$ is also a  $k\text{-}MIP^{\star}$-set.
\end{lemma}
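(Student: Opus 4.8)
The plan is to pass to the product of the two witnessing systems. Since $A$ is a $k\text{-}MIP^{\star}$-set, fix a mild mixing system $\left(X,\mathcal{B},\mu,T\right)$ and sets $A_{0},A_{1},\ldots,A_{k}\in\mathcal{B}$ with $\mu\left(A_{0}\right)\mu\left(A_{1}\right)\cdots\mu\left(A_{k}\right)>0$ and $\left\{n\in\mathbb{N}:\mu\left(\bigcap_{i=0}^{k}T^{-in}A_{i}\right)>0\right\}\subseteq A$. Likewise, since $B$ is a $k\text{-}MIP^{\star}$-set, fix a mild mixing system $\left(Y,\mathcal{C},\nu,S\right)$ and sets $B_{0},B_{1},\ldots,B_{k}\in\mathcal{C}$ with $\nu\left(B_{0}\right)\nu\left(B_{1}\right)\cdots\nu\left(B_{k}\right)>0$ and $\left\{n\in\mathbb{N}:\nu\left(\bigcap_{i=0}^{k}S^{-in}B_{i}\right)>0\right\}\subseteq B$.

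Next I would form the product measure preserving system $\left(X\times Y,\mathcal{B}\otimes\mathcal{C},\mu\times\nu,T\times S\right)$, which is again a system on a probability space and is mild mixing by Lemma \ref{product of mild mixing}. Put $C_{i}=A_{i}\times B_{i}$ for $i\in\left\{0,1,\ldots,k\right\}$. Then $\left(\mu\times\nu\right)\left(C_{i}\right)=\mu\left(A_{i}\right)\nu\left(B_{i}\right)$, so the product $\prod_{i=0}^{k}\left(\mu\times\nu\right)\left(C_{i}\right)$ is positive, since it factors as $\left(\prod_{i=0}^{k}\mu\left(A_{i}\right)\right)\left(\prod_{i=0}^{k}\nu\left(B_{i}\right)\right)$. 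Because $\left(T\times S\right)^{-in}C_{i}=T^{-in}A_{i}\times S^{-in}B_{i}$ for every $n$, intersecting over $i$ yields
$$\left(\mu\times\nu\right)\!\left(\bigcap_{i=0}^{k}\left(T\times S\right)^{-in}C_{i}\right)=\mu\!\left(\bigcap_{i=0}^{k}T^{-in}A_{i}\right)\cdot\nu\!\left(\bigcap_{i=0}^{k}S^{-in}B_{i}\right),$$
which is positive exactly when both factors are. Hence the set $\left\{n:\left(\mu\times\nu\right)\left(\bigcap_{i=0}^{k}\left(T\times S\right)^{-in}C_{i}\right)>0\right\}$ is precisely the intersection of $\left\{n:\mu\left(\bigcap_{i=0}^{k}T^{-in}A_{i}\right)>0\right\}$ and $\left\{n:\nu\left(\bigcap_{i=0}^{k}S^{-in}B_{i}\right)>0\right\}$, hence contained in $A\cap B$. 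So the product system together with $C_{0},C_{1},\ldots,C_{k}$ witnesses that $A\cap B$ is a $k\text{-}MIP^{\star}$-set.

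As for difficulty, there is essentially none beyond bookkeeping: the one nontrivial ingredient is Lemma \ref{product of mild mixing}, that a product of two mild mixing systems is mild mixing, and the remaining steps are the routine facts that a product of probability spaces is a probability space and that rectangles transform under $T\times S$ by coordinatewise preimages. If one prefers a self-contained route, one could instead apply to the product system the $p$-limit characterization of mild mixing recorded above, but the argument via Lemma \ref{product of mild mixing} is the shortest.
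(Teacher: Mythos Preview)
Your proof is correct and follows essentially the same approach as the paper: both pass to the product system, invoke Lemma~\ref{product of mild mixing} to ensure it is mild mixing, take $C_i=A_i\times B_i$, and use the factorization $(\mu\times\nu)\bigl(\bigcap_i (T\times S)^{-in}C_i\bigr)=\mu\bigl(\bigcap_i T^{-in}A_i\bigr)\,\nu\bigl(\bigcap_i S^{-in}B_i\bigr)$ to conclude. Your write-up is in fact slightly more explicit than the paper's in justifying this factorization.
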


\begin{proof}
 There exist two mild mixing system $ \left(X,\mathcal{A},\mu,T\right)$ and $ \left(X,\mathcal{B},\nu,S\right)$ and $A_{0},A_{1},\ldots\,A_{k} \in\mathcal{A}$ with $\mu\left(A_{0}\right)\mu\left(A_{1}\right)\cdots\mu\left(A_{k}\right)>0$ and $B_{0},B_{1},\ldots\,B_{k} \in\mathcal{B}$ with $\nu\left(B_{0}\right)\nu\left(B_{1}\right)\cdots\nu\left(B_{k}\right)>0$  such that $$\left\{n:\mu\left(\bigcap_{i=0}^{k}T^{-in}A_{i}\right)>0\right\} \subseteq A \text{ and }\left\{n:\nu\left(\bigcap_{i=0}^{k}S^{-in}B_{i}\right)>0\right\} \subseteq B.$$ 
  Now by the lemma \ref{product of mild mixing},  $ \left(X\times Y,\mathcal{A}\times \mathcal{B},\mu \times \nu,T\times S\right)$  is mild mixing system. Here $\mathcal{A}\times\mathcal{B}$ is a $\sigma$-algebra generated by $\left\{A\times B:A\in\mathcal{A} \text { and } B\in \mathcal{B}\right\}$ with $\mu\times\nu\left(A\times B\right)=\mu\left(A\right)\nu\left(B\right), \forall A\in\mathcal{A}\text{ and }\forall B\in\mathcal{B}$.Then for some $n\in \mathbb{N}$, $\mu\times \nu \left(\bigcap_{i=0}^{k}\left(T\times S\right)^{-in}\left(A_{i}\times B_{i}\right)\right)>0$ if and only if $\mu\left(\bigcap_{i=0}^{k}T^{-in}A_{i}\right)>0$ and $\nu\left(\bigcap_{i=0}^{k}S^{-in}B_{i}\right)>0$.Then $$\left\{n: \mu\times \nu \left(\bigcap_{i=0}^{k}\left(T\times S\right)^{-in}\left(A_{i}\times B_{i}\right)\right)>0\right\}\subseteq A\cap B$$  and $ \left(X\times Y,\mathcal{A}\times \mathcal{B},\mu \times \nu,T\times S\right)$ being a mild mixing system, we have the  intersection $A\cap B$ is a  $k\text{-}MIP^{\star}$-set.
\end{proof}

\begin{lemma}
	Let $n\in \mathbb{N}$. Let $A$ be a $k\text{-}MIP^{\star}$-set.Then there exists a $k\text{-}MIP^{\star}$-set $B$ such that $B\subset A$ with for all  $n\in B$, $-n+B$ is a $k\text{-}MIP^{\star}$-set.
\end{lemma}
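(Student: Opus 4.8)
The plan is to show that the set which already witnesses $A$ being a $k\text{-}MIP^{\star}$-set does the job, so we simply take $B$ to be that set. Concretely, since $A$ is a $k\text{-}MIP^{\star}$-set, first fix a mild mixing system $\left(X,\mathcal{B},\mu,T\right)$ and sets $A_{0},A_{1},\ldots,A_{k}\in\mathcal{B}$ with $\mu\left(A_{0}\right)\mu\left(A_{1}\right)\cdots\mu\left(A_{k}\right)>0$ such that $E=\left\{m:\mu\left(\bigcap_{i=0}^{k}T^{-im}A_{i}\right)>0\right\}\subseteq A$, and set $B=E$. Then $B\subseteq A$ is automatic, and $B$ is itself a $k\text{-}MIP^{\star}$-set, witnessed by the same system and the same sets, because $E\subseteq E$.

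The main step is to compute $-n+B$ for $n\in B$ and recognise it as a $k\text{-}MIP^{\star}$-set. For this I would use the identity $T^{-i(n+m)}A_{i}=T^{-im}\bigl(T^{-in}A_{i}\bigr)$, valid because powers of $T$ commute, and write $A_{i}^{(n)}=T^{-in}A_{i}$. Then
$$-n+B=\left\{m:\mu\left(\bigcap_{i=0}^{k}T^{-i(n+m)}A_{i}\right)>0\right\}=\left\{m:\mu\left(\bigcap_{i=0}^{k}T^{-im}A_{i}^{(n)}\right)>0\right\}.$$
Since $T$ is measure preserving, $\mu\bigl(A_{i}^{(n)}\bigr)=\mu\left(T^{-in}A_{i}\right)=\mu\left(A_{i}\right)$ for each $i$, so $\mu\bigl(A_{0}^{(n)}\bigr)\mu\bigl(A_{1}^{(n)}\bigr)\cdots\mu\bigl(A_{k}^{(n)}\bigr)=\mu\left(A_{0}\right)\mu\left(A_{1}\right)\cdots\mu\left(A_{k}\right)>0$. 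Hence $-n+B$ is exactly the defining set of a $k\text{-}MIP^{\star}$-set, for the same mild mixing system $\left(X,\mathcal{B},\mu,T\right)$ together with the shifted sets $A_{0}^{(n)},\ldots,A_{k}^{(n)}$, and in particular $-n+B$ is a $k\text{-}MIP^{\star}$-set.

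I do not expect a genuine obstacle: the proof is a direct unwinding of the definition, in the same spirit as the shift computation in the proof of Theorem \ref{Dynamical IPn*}. The only points needing care are the exponent bookkeeping, namely distinguishing $T^{-i(n+m)}$ from $T^{-im}$ composed with $T^{-in}$, and the observation that measure preservation keeps the product $\mu\left(A_{0}\right)\cdots\mu\left(A_{k}\right)$ positive after shifting; both are immediate. It is worth noting that the same computation in fact shows $-n+B$ is a $k\text{-}MIP^{\star}$-set for \emph{every} $n\in\mathbb{N}$, and that, combined with the two preceding lemmas (closure under $m^{-1}(\cdot)$ and under intersection) and the earlier remark that every $k\text{-}MIP^{\star}$-set is an $IP^{\star}$-set, this establishes that the family of all $k\text{-}MIP^{\star}$-sets is a $ZFSP$ family, so that Theorem \ref{Zig-zag in ZFSP} applies to it and answers Goswami's question affirmatively for this class.
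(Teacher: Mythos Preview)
Your proof is correct and follows the same overall strategy as the paper: take $B$ to be the witnessing set $\bigl\{n:\mu\bigl(\bigcap_{i=0}^{k}T^{-in}A_i\bigr)>0\bigr\}$ and then exhibit $-n+B$ as a $k\text{-}MIP^{\star}$-set in the same mild mixing system. The only difference is in the witness chosen for $-n+B$. The paper sets $C=\bigcap_{i=0}^{k}T^{-in}A_i$ (which has positive measure precisely because $n\in B$), uses $C$ in all $k+1$ slots, and proves only the inclusion $\bigl\{m:\mu\bigl(\bigcap_{i=0}^{k}T^{-im}C\bigr)>0\bigr\}\subseteq -n+B$ via $\bigcap_i T^{-im}C\subseteq\bigcap_i T^{-i(n+m)}A_i$. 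You instead shift each $A_i$ individually to $A_i^{(n)}=T^{-in}A_i$ and obtain the \emph{equality} $-n+B=\bigl\{m:\mu\bigl(\bigcap_{i=0}^{k}T^{-im}A_i^{(n)}\bigr)>0\bigr\}$. Your variant is marginally sharper---as you observe, it works for every $n\in\mathbb{N}$, not only $n\in B$, since $\mu(A_i^{(n)})=\mu(A_i)$ by measure preservation---but both are direct unwindings of the definition and neither requires any idea beyond the exponent bookkeeping.
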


\begin{proof}
As $A$ is $k\text{-}MIP^{\star}$-set, there exist a mild mixing system $ \left(X,\mathcal{B},\mu,T\right)$ and $A_{0},A_{1},\ldots\,A_{k} \in\mathcal{B}$ with $\mu\left(A_{0}\right)\mu\left(A_{1}\right)\cdots\mu\left(A_{k}\right)>0$ such that $$ B=\left\{n:\mu\left(\bigcap_{i=0}^{k}T^{-in}A_{i}\right)>0\right\} \subseteq A.$$ To see that $B$ is required , let $n\in B$ and $C=\bigcap_{i=0}^{k}T^{-in}A_{i}$. We claim that $ \left\{n:\mu\left(\bigcap_{i=0}^{k}T^{-in}C\right)>0\right\} \subseteq -n+B $.  Let for some $m\in \mathbb{N}$, we have $\mu\left(\bigcap_{i=0}^{k}T^{-im}C\right)>0$ . Now $\bigcap_{i=0}^{k}T^{-im}C=\bigcap_{i=0}^{k}T^{-im}\left(\bigcap_{j=0}^{k}T^{-jn}A_{j}\right)$,  which is a subset of $\bigcap_{i=0}^{k}T^{-i\left(n+m\right)}A_{i}$. Then $ m\in {-n+B}$ as  $\mu\left(\bigcap_{i=0}^{k}T^{-i\left(n+m\right)}A_{i}\right)>0$.

\end{proof}
  
 \begin{theorem}\label{MIP-ZFPS}
 	 Let $k\in \mathbb{N}$. The collection of all  $k\text{-}MIP^{\star}$-sets forms a $ZFSP$ family.
 \end{theorem}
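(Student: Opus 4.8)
The plan is to verify the four defining conditions (a)--(d) of a $ZFSP$ family for the collection $\mathcal{F}$ of all $k\text{-}MIP^{\star}$-sets, noting that conditions (b), (c) and (d) have in fact already been checked in the three lemmas immediately preceding this statement.

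For condition (a), I would show that every $k\text{-}MIP^{\star}$-set is an $IP^{\star}$-set. If $C$ is a $k\text{-}MIP^{\star}$-set, pick a witnessing mild mixing system $\left(X,\mathcal{B},\mu,T\right)$ and $A_{0},A_{1},\ldots,A_{k}\in\mathcal{B}$ with $\mu\left(A_{0}\right)\mu\left(A_{1}\right)\cdots\mu\left(A_{k}\right)>0$ such that $E=\left\{n:\mu\left(\bigcap_{i=0}^{k}T^{-in}A_{i}\right)>0\right\}\subseteq C$. Applying the $p$-limit characterization of mild mixing stated above with the index choice $e_{i}=i$, one gets $p\text{-}\lim_{n}\mu\left(\bigcap_{i=0}^{k}T^{-in}A_{i}\right)=\prod_{i=0}^{k}\mu\left(A_{i}\right)>0$ for every $p\in E\left(\beta\mathbb{N}\right)$; choosing a neighbourhood of this limit that avoids $0$ shows $E\in p$ for every idempotent $p$, so $E$ is an $IP^{\star}$-set --- this is exactly the remark recorded just before the definition of $k\text{-}MIP^{\star}$-sets. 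Since a superset of an $IP^{\star}$-set is again an $IP^{\star}$-set (it still meets every $IP$-set), the set $C\supseteq E$ is an $IP^{\star}$-set; in particular each member of $\mathcal{F}$ is nonempty, so $\mathcal{F}\subseteq\mathcal{P}\left(\mathbb{N}\right)\setminus\left\{\emptyset\right\}$.

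Conditions (b), (c) and (d) are then immediate from the preparatory lemmas. Condition (c), that $A\cap B\in\mathcal{F}$ whenever $A,B\in\mathcal{F}$, is the lemma proved via the product of the two witnessing systems, which is mild mixing by Lemma~\ref{product of mild mixing}. Condition (d), that $m^{-1}A\in\mathcal{F}$ whenever $A\in\mathcal{F}$ and $m\in\mathbb{N}$, is the lemma proved by passing to the power system $\left(X,\mathcal{B},\mu,T^{m}\right)$, which is mild mixing by Lemma~\ref{power of mild mixing}. Condition (b), the existence for each $A\in\mathcal{F}$ of some $B\in\mathcal{F}$ with $B\subseteq A$ and $-n+B\in\mathcal{F}$ for all $n\in B$, is the last of the three lemmas. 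Putting (a)--(d) together gives the theorem.

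I do not anticipate a genuine obstacle: the substantive content has been front-loaded into the three lemmas together with Lemmas~\ref{product of mild mixing} and~\ref{power of mild mixing}, and condition (a) is essentially the already-noted fact that the generating sets $\left\{n:\mu\left(\bigcap_{i=0}^{k}T^{-in}A_{i}\right)>0\right\}$ are $IP^{\star}$. The only points worth a line of care are the passage from "$E$ is $IP^{\star}$" to "$C\supseteq E$ is $IP^{\star}$" and the identification $e_{i}=i$ linking the $p$-limit theorem to the definition of $k\text{-}MIP^{\star}$-sets. Once Theorem~\ref{MIP-ZFPS} is in hand, Theorem~\ref{Zig-zag in ZFSP} applies, so every $k\text{-}MIP^{\star}$-set contains the zigzag finite sums and products of a sum subsystem of any finite list of sequences --- the positive instance of Goswami's question advertised in this section.
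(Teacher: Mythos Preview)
Your proposal is correct and matches the paper's own proof, which simply states ``Follows from the above lemmas.'' You have merely made explicit what the paper leaves implicit, in particular spelling out condition (a) via the $p$-limit characterization already noted just before the definition of $k\text{-}MIP^{\star}$-sets.
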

 \begin{proof}
 	Follows from the above lemmas.
 \end{proof}
In \cite{FK}, Furstenberg and Katznelson proved the following:
\begin{theorem}
 Let $k\in \mathbb{N}$.	Let $ \left(X,\mathcal{B},\mu,T\right)$ be a measure preserving system  and $A\in \mathcal{B}$ with $\mu\left(A\right)>0$. Then there exists $r\in \mathbb{N}$ such that $$R_{k} =\left\{n:\mu \left(\bigcap_{i=0}^{k}T^{-in}A\right)>0\right\}.$$ is an $IP_{r}^{\star}$-set.
\end{theorem}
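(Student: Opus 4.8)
The plan is to reduce the statement to a finitary pigeonhole argument, exactly in the spirit of the proof of Theorem~\ref{dynamical IPr*} above, but now applied to the $(k+1)$-fold intersection $\bigcap_{i=0}^{k}T^{-in}A$. First I would recall the Furstenberg--Katznelson multiple recurrence theorem: for a measure preserving system $\left(X,\mathcal{B},\mu,T\right)$ and $A\in\mathcal{B}$ with $\mu\left(A\right)>0$, and for any $IP$-set $P\subseteq\mathbb{N}$, there is some $n\in P$ with $\mu\bigl(\bigcap_{i=0}^{k}T^{-in}A\bigr)>0$; equivalently, $R_{k}=\left\{n:\mu\bigl(\bigcap_{i=0}^{k}T^{-in}A\bigr)>0\right\}$ is an $IP^{\star}$-set. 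This is the deep input; I would cite it and not reprove it. The genuine content of the present theorem over that classical fact is the \emph{uniformity}: the single $r$ works, i.e.\ $R_{k}$ is already $IP_{r}^{\star}$ for an $r$ depending only on $k$ and $\mu\left(A\right)$ (in fact on $\lfloor 1/\mu(A)\rfloor$ and $k$), not on the system.

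The key steps, in order, would be: (1) Suppose for contradiction that $R_{k}$ is not $IP_{r}^{\star}$ for any $r$; then for every $r$ there is a length-$r$ sequence $\langle x_{1}^{(r)},\dots,x_{r}^{(r)}\rangle$ in $\mathbb{N}$ with $FS\bigl(\langle x_{n}^{(r)}\rangle_{n=1}^{r}\bigr)\cap R_{k}=\emptyset$. (2) Run the disjointness/volume-packing argument: along the finite sequence, consider the "partial-sum shifts" of the form $T^{-(x_{a+1}+\dots+x_{b})}$ applied to suitable nested intersections; if $r$ is chosen large compared to $1/\mu(A)$ and $k$, a counting argument forces two of the relevant sets to have positive-measure overlap, which translates back (by applying a power of $T$ and using measure-preservation) to $\mu\bigl(\bigcap_{i=0}^{k}T^{-im}A\bigr)>0$ for some $m\in FS\bigl(\langle x_{n}^{(r)}\rangle_{n=1}^{r}\bigr)$, contradicting the choice of the sequence. (3) Conclude that such an $r=r(k,\mu(A))$ exists, hence $R_{k}$, and therefore the larger set $C\supseteq R_{k}$ in the statement, is $IP_{r}^{\star}$. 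An alternative route I would keep in mind is the ultrafilter/compactness formulation: $R_{k}$ is $IP^{\star}$ means $R_{k}\in p+p+\dots+p$ is false to avoid — rather, it means $R_{k}$ meets $FS(\langle x_n\rangle)$ for every sequence; a compactness (König's lemma / ultrafilter limit) argument then upgrades the family of finite obstructions into an infinite $IP$-set disjoint from $R_{k}$, contradicting the classical $IP^{\star}$ statement, and the bound on $r$ is extracted from the finitary Furstenberg--Katznelson density increment.

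The main obstacle I anticipate is \emph{quantifying} the recurrence: getting an explicit, system-independent $r$. The naive pigeonhole packing argument of Theorem~\ref{dynamical IPr*} handles the case $k=1$ cleanly (there one only needs $r\mu(A)>1$), but for $k\geq 2$ the sets $\bigcap_{i=0}^{k}T^{-i m}A$ over $m$ ranging in an $FS$-set are not pairwise "independent" in a way that a one-line volume bound controls; one really needs the multiple recurrence machinery (IP-Szemer\'edi / IP-van der Corput) to produce the positive lower bound, and then an $IP$-Ramsey compactness step to make it finitary. So I would either (a) invoke the finitary/uniform version of IP-Szemer\'edi directly if it is available in the literature in the exact form needed, or (b) present the contradiction via an ultrafilter limit: take a nonprincipal ultrafilter $\mathcal{U}$ on the index $r$, form the limit sequence, and observe that it would yield an infinite $IP$-set avoiding $R_{k}$, contradicting the (already established) fact that $R_{k}$ is $IP^{\star}$ — this avoids re-deriving uniformity by hand, at the cost of being nonconstructive about the size of $r$. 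Given the surrounding text's style, I would lean on route (b) for the existence of $r$ and simply cite \cite{FK} for the underlying multiple recurrence.
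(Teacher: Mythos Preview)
The paper does not give a proof of this theorem at all: it is simply quoted as a result of Furstenberg and Katznelson and attributed to \cite{FK}. So your route (a) --- invoke the finitary IP-Szemer\'edi theorem directly from the literature --- is exactly what the paper does, and there is nothing more to say on that side.

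Your preferred route (b), however, has a genuine gap. You propose to deduce ``$R_{k}$ is $IP_{r}^{\star}$ for some $r$'' from the infinitary statement ``$R_{k}$ is $IP^{\star}$'' by a compactness/ultrafilter argument: assume that for every $r$ there is a length-$r$ sequence with $FS$ disjoint from $R_{k}$, take an ultrafilter limit over $r$, and obtain an infinite $IP$-set disjoint from $R_{k}$. This step fails in general, and the paper itself furnishes the counterexample. In Theorem~\ref{IP infty set} the set
\[
A=\Bigl\{\textstyle\sum_{t\in H_{1}}2^{2t}+\sum_{t\in H_{2}}2^{2t+1}:H_{1}<H_{2}\Bigr\}
\]
is shown to be an $IP_{\infty}$-set (i.e.\ $IP_{r}$ for every $r$) which is \emph{not} an $IP$-set. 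Consequently $\mathbb{N}\setminus A$ is $IP^{\star}$ but is \emph{not} $IP_{r}^{\star}$ for any $r$. The reason compactness breaks down is visible in that construction: the witnessing finite sequences $x_{i}=2^{2i}+2^{2(r+1)+i}$ depend on $r$ in an essential way and cannot be spliced into a single infinite sequence whose $FS$ stays inside $A$. So the implication ``$IP^{\star}\Rightarrow IP_{r}^{\star}$ for some $r$'' is simply false, and no K\H{o}nig's-lemma or ultrafilter-limit argument can repair it.

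The upshot is that the finitary conclusion here is not a soft consequence of the infinitary $IP^{\star}$ multiple recurrence; it genuinely requires the quantitative machinery of \cite{FK}. Your instinct that ``one really needs the multiple recurrence machinery (IP-Szemer\'edi / IP-van der Corput)'' is correct, but that machinery must be used to prove the $IP_{r}^{\star}$ statement directly, not merely the $IP^{\star}$ statement followed by compactness. If you want to write a proof rather than a citation, you would have to follow \cite{FK} into the IP-limit formalism and the structure theorem; there is no shortcut of the kind you sketch in (b).
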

  
  \begin{definition}
  	Let $k\in \mathbb{N}$. Let $C$ be a subset of $\mathbb{N}$, is called $k\text{-}RIP^{\star}$-set iff there exist a measure preserving system $ \left(X,\mathcal{B},\mu,T\right)$ and $A\in \mathcal{B}$ with $\mu\left(A\right)>0$ such that $\left\{n:\mu\left(\bigcap_{i=0}^{k}T^{-in}A\right)>0\right\} \subseteq C$.
  	
  \end{definition}
  
   Like the family of $k\text{-}MIP^{\star}$-sets, we can prove that the collection of $k\text{-}RIP^{\star}$-sets forms a $ZFSP$ family. So it is a simple consequence that any $k\text{-}RIP^{\star}$-set is an $IP^{n\star}$-set and  states the following:
  
\begin{corollary}
	Let $k,n\in \mathbb{N}$.	Let $ \left(X,\mathcal{B},\mu,T\right)$ be a measure preserving system  and $A\in \mathcal{B}$ with $\mu\left(A\right)>0$. Then $$R_{k} =\left\{n:\mu \left(\bigcap_{i=0}^{k}T^{-in}A\right)>0\right\}$$ is an $IP^{n\star}$-set.
\end{corollary}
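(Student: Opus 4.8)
The plan is to deduce the Corollary from the fact, asserted just above, that the family $\mathcal{F}$ of all $k\text{-}RIP^{\star}$-sets is a $ZFSP$ family, together with the idempotent characterization of $IP^{n\star}$-sets in \cite[Corollary 4.4]{BH}. The first observation is that $R_{k}$ is itself a $k\text{-}RIP^{\star}$-set: the measure preserving system $(X,\mathcal{B},\mu,T)$ and the set $A$ of the hypothesis witness this with $C=R_{k}$, the defining inclusion $\{n:\mu(\bigcap_{i=0}^{k}T^{-in}A)>0\}\subseteq R_{k}$ being an equality. Hence it suffices to prove the general statement that every member of a $ZFSP$ family is an $IP^{n\star}$-set for every $n\in\mathbb{N}$; in fact only properties (a) and (b) of a $ZFSP$ family will be used.

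Before that I would verify that $\mathcal{F}$ really is a $ZFSP$ family, since this is where the only substantial input enters. Property (a) is precisely the Furstenberg--Katznelson theorem quoted above: a $k\text{-}RIP^{\star}$-set contains, by definition, a set of the form $\{n:\mu(\bigcap_{i=0}^{k}T^{-in}A')>0\}$, which that theorem shows to be an $IP_{r}^{\star}$-set for some $r$; since a superset of an $IP_{r}^{\star}$-set is again $IP_{r}^{\star}$, and every $IP_{r}^{\star}$-set is $IP^{\star}$ (every $IP$-set being in particular an $IP_{r}$-set), property (a) follows. Properties (b), (c), (d) are verified exactly as the corresponding lemmas for $k\text{-}MIP^{\star}$-sets, the only simplification being that Lemmas \ref{product of mild mixing} and \ref{power of mild mixing} are not needed: a Cartesian product of measure preserving systems is measure preserving, and $(X,\mathcal{B},\mu,T^{m})$ is measure preserving whenever $(X,\mathcal{B},\mu,T)$ is. Concretely, (c) uses the product system with the rectangle $A'\times B'$; (d) uses $(X,\mathcal{B},\mu,T^{m})$ together with $\{n:\mu(\bigcap_{i=0}^{k}(T^{m})^{-in}A')>0\}=m^{-1}\{j:\mu(\bigcap_{i=0}^{k}T^{-ij}A')>0\}$; and for (b), given $m\in C=\{n:\mu(\bigcap_{i=0}^{k}T^{-in}A')>0\}$ one sets $D=\bigcap_{i=0}^{k}T^{-im}A'$, which has positive measure, and observes $\bigcap_{i,j=0}^{k}T^{-(in+jm)}A'\subseteq\bigcap_{i=0}^{k}T^{-i(n+m)}A'$, so that $\{n:\mu(\bigcap_{i=0}^{k}T^{-in}D)>0\}\subseteq -m+C$.

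It then remains to show that any member $A$ of a $ZFSP$ family lies in $p_{1}+p_{2}+\cdots+p_{n}$ for all idempotents $p_{1},\dots,p_{n}\in E(\beta\mathbb{N})$; by \cite[Corollary 4.4]{BH} this makes $A$ an $IP^{n\star}$-set, which applied to $A=R_{k}$ gives the Corollary. I would argue by induction on $n$, imitating the proof of Theorem \ref{Dynamical IPn*}. For $n=1$, property (a) says $A$ is $IP^{\star}$, hence a member of every idempotent. For the inductive step, use property (b) to choose $C\in\mathcal{F}$ with $C\subseteq A$ and $-m+C\in\mathcal{F}$ for every $m\in C$; since each $-m+C$ is $IP^{\star}$ by property (a), it lies in the idempotent $p_{n}$, so $C\subseteq\{m:-m+C\in p_{n}\}$; the inductive hypothesis gives $C\in p_{1}+\cdots+p_{n-1}$, whence $\{m:-m+C\in p_{n}\}\in p_{1}+\cdots+p_{n-1}$, i.e.\ $C\in(p_{1}+\cdots+p_{n-1})+p_{n}$, and therefore $A\in p_{1}+\cdots+p_{n}$.

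The induction in the last step is soft and is the same $\beta\mathbb{N}$-algebra argument already used for dynamical $IP^{\star}$-sets, so I do not expect difficulty there; the genuinely nontrivial ingredient is property (a) of the $ZFSP$ family, which rests entirely on the Furstenberg--Katznelson IP multiple recurrence theorem, while the place most prone to a careless slip is the index bookkeeping in the verification of property (b).
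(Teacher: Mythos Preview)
Your proposal is correct and follows the same skeleton as the paper: both first note that $R_{k}$ is a $k\text{-}RIP^{\star}$-set, then use that the family of $k\text{-}RIP^{\star}$-sets is a $ZFSP$ family, with the Furstenberg--Katznelson theorem supplying property (a). The only difference lies in how the final implication ``$A\in\mathcal{F}\Rightarrow A$ is $IP^{n\star}$'' is extracted. The paper's intended route is Theorem~\ref{Zig-zag in ZFSP} applied with $l=n$ sequences, observing that the resulting $ZFS$ structure contains the ordered sums $\sum_{i=1}^{n}\sum_{t\in H_{i}}y_{i,t}$ with $H_{1}<\cdots<H_{n}$, and then invoking the combinatorial characterization Theorem~\ref{Cha IPn*}. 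You instead bypass Theorem~\ref{Zig-zag in ZFSP} entirely and give a direct $\beta\mathbb{N}$ induction using only properties (a) and (b), essentially abstracting the proof of Theorem~\ref{Dynamical IPn*} to an arbitrary $ZFSP$ family. Your route is a bit more economical for this particular corollary (it does not need properties (c), (d) or the zigzag induction), while the paper's route has the advantage of yielding the stronger zigzag conclusion en passant.
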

  
  For a mild mixing system, we get the following corollary applying Theorem\ref{MIP-ZFPS} and Theorem\ref{Zig-zag in ZFSP}:
  \begin{corollary}
  	Let $k,n\in \mathbb{N}$.	Let $ \left(X,\mathcal{B},\mu,T\right)$ be a mild mixing. Then  $\left\{n:\mu \left(\bigcap_{i=0}^{k}T^{-in}A_{i}\right)>0\right\}$ is an $IP^{n\star}$-set, for any $A_{0},A_{1},\ldots\,A_{k} \in\mathcal{B}$ with $\mu\left(A_{0}\right)\mu\left(A_{1}\right)\cdots\mu\left(A_{k}\right)>0$.
  \end{corollary}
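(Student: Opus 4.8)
The plan is to identify the set in question with a $k\text{-}MIP^{\star}$-set, invoke that these form a $ZFSP$ family, and then extract a genuine $IP^{n}$-element from it using Theorem \ref{Zig-zag in ZFSP}. To avoid the clash between the bound variable and the order $n$, write $E=\{m\in\mathbb{N}:\mu(\bigcap_{i=0}^{k}T^{-im}A_{i})>0\}$. Since $(X,\mathcal{B},\mu,T)$ is mild mixing and $\mu(A_{0})\cdots\mu(A_{k})>0$, the set $E$ is by definition a $k\text{-}MIP^{\star}$-set (and, as recorded above, an $IP^{\star}$-set). By Theorem \ref{MIP-ZFPS} the collection $\mathcal{F}$ of all $k\text{-}MIP^{\star}$-sets is a $ZFSP$ family, so $E\in\mathcal{F}$.

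To prove that $E$ is $IP^{n\star}$, I fix an arbitrary $IP^{n}$-set $F$ and show $E\cap F\neq\emptyset$. Choose sequences $\langle x_{1,t}\rangle_{t=1}^{\infty},\ldots,\langle x_{n,t}\rangle_{t=1}^{\infty}$ witnessing that $F$ is an $IP^{n}$-set, so that $\{\sum_{i=1}^{n}\sum_{t\in H_{i}}x_{i,t}:H_{1}<\cdots<H_{n}\}\subseteq F$. Applying Theorem \ref{Zig-zag in ZFSP} to $E\in\mathcal{F}$ with $l=n$ and these $n$ sequences produces sum subsystems $\langle y_{j}^{(i)}\rangle_{j=1}^{\infty}$ of $\langle x_{i,t}\rangle_{t=1}^{\infty}$, say $y_{j}^{(i)}=\sum_{t\in H_{j}^{(i)}}x_{i,t}$, with $ZFS(\langle y_{j}^{(i)}\rangle_{i,j=1,1}^{n,\infty})\subseteq E$. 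The ``diagonal'' element $z=\sum_{j=1}^{n}y_{j}^{(j)}=\sum_{i=1}^{n}\sum_{t\in H_{i}^{(i)}}x_{i,t}$ belongs to $ZFS(\langle y_{j}^{(i)}\rangle_{i,j=1,1}^{n,\infty})$ — take $H=\{1,2,\ldots,n\}$ and pick the $i$-th summand from the $i$-th sequence — hence $z\in E$.

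It then remains to see $z\in F$, i.e.\ that $H_{1}^{(1)}<H_{2}^{(2)}<\cdots<H_{n}^{(n)}$, and this is the only genuine obstacle: Theorem \ref{Zig-zag in ZFSP} as stated only guarantees $\max H_{j}^{(i)}<\min H_{j+1}^{(i)}$ within each fixed sequence, while $IP^{n}$-membership of $z$ needs the cross-sequence ordering. The remedy is a harmless strengthening of that theorem: in its inductive construction, at stage $m+1$ one chooses the whole block $\langle H_{m+1}^{(i)}\rangle_{i=1}^{n}$ at once, and nothing prevents choosing it so far out that $\max\{\max H_{m}^{(i)}:1\le i\le n\}<\min\{\min H_{m+1}^{(i)}:1\le i\le n\}$; this global diagonal growth forces $H_{1}^{(1)}<H_{2}^{(2)}<\cdots<H_{n}^{(n)}$, whence $z\in E\cap F$ and $E$ is $IP^{n\star}$. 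I would close by remarking that, since the $k\text{-}RIP^{\star}$-sets likewise form a $ZFSP$ family, the identical argument also yields the preceding corollary, so both corollaries come out of a single proof.
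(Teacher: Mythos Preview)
Your proof is correct and follows exactly the route the paper indicates (apply Theorem~\ref{MIP-ZFPS} to place $E$ in a $ZFSP$ family, then invoke Theorem~\ref{Zig-zag in ZFSP}); the paper itself gives no further details beyond citing those two results. You are in fact more careful than the paper: the cross-sequence ordering $H_{1}^{(1)}<H_{2}^{(2)}<\cdots<H_{n}^{(n)}$ is indeed not guaranteed by Theorem~\ref{Zig-zag in ZFSP} as written, and your observation that the inductive step can painlessly enforce $\max_{i}\max H_{m}^{(i)}<\min_{i}\min H_{m+1}^{(i)}$ is exactly the right patch.
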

  
\bibliographystyle{plain}

\begin{thebibliography}{9}
	
	\bibitem{BH} V. Bergelson and N. Hindman, Quotient sets and density recurrent sets, Trans. Amer. Math. Soc. 364 (2012), 4495-4531.
	
	\bibitem{BH2} V. Bergelson and N. Hindman, On $IP^{\star}$-sets and central sets, Combinatorica 14 (1994), 269-277.
	
    \bibitem{D1} D. De, Additive and Multiplicative structure of , $C^{\star}$-set Integers, 14, 2(2014), A26
	
     \bibitem{D2} D. De, Combined algebraic properties of central ${}^{\star}$- sets, Integers 7 (2007), A37.
	
    \bibitem{DHS} D. De, N. Hindman, and D. Strauss, A new and stronger Central Sets Theorem, Fundamenta Mathematicae 199 (2008), 155-175.
	
	\bibitem{F} H. Furstenberg, Recurrence in Ergodic Theory and Combinatorial Number Theory, Princeton University Press, 1981.
	
	\bibitem{FK} H. Furstenberg and Y. Katznelson, An ergodic Szemerédi theorem for IP-systems and combinatorial theory, d’Analyse Mathématique , Volume 45, pages 117–168, (1985).
 
    \bibitem{G1} S. Goswami, Product of difference sets of the set of primes, Proc. Amer. Math. Soc. 151 (2023) 5081-5086.
 
	\bibitem{G}	S. Goswami, Combined Zigzag structure in Dynamically $IP^{\star}$-sets, Topology and its Applications, volume 300, 15th August 2021, 107752.

   \bibitem{GHW} S. Goswami, W.Huang, X.Wu, on the set of kornecker number.Bulletine of the Australian Mathematical socity. Published online 2024: 1-9.
	
	\bibitem{H} N. Hindman: Finite sums from sequences within cells of partitions of $\mathbb{N}$, J. Comb. Theory (Series A) 17 (1974), 1-11.
	
	\bibitem{HS} N. Hindman and D. Strauss, Algebra in the	Stone-\v Cech compactification: theory and applications, second edition, de Gruyter, Berlin, 2012.
	
	\bibitem{LL} X. Liang and Q. Liao, Characterizations of topological dynamical IP* sets, Journal of Difference Equations and Applications, (2024) 1–24.
		
	\bibitem{Z1} T. Zhang, Zigzag structures in $IP^{\star}$ sets and dynamical $IP^{\star}$ sets, Topology and its Applications, Volume 327, 15 March 2023, 108437.
	
	\bibitem{Z2} T. Zhang,  Zigzag structures in $IP^{\star}$ sets, Semigroup Forum , 25 May 2023, Volume 106, pages 747–750, (2023).
	
	

	
\end{thebibliography}

 \end{document}